\numberwithin{equation}{section}
\theoremstyle{plain}
\newtheorem{exam}{Example}[section]
\newtheorem{theorem}[exam]{Theorem}
\newtheorem{lemma}[exam]{Lemma}
\newtheorem{remark}[exam]{Remark}
\newtheorem{proposition}[exam]{Proposition}
\newtheorem{corollary}[exam]{Corollary}
\title{Multiple Limit Cycles and Heteroclinic Loops in a Predator-prey System with Allee Effects in Prey}
\author{Alessandro Arsie\footnotemark[2], \ Chanaka Kottegoda\footnotemark[2], \ and \  Chunhua Shan\footnotemark[2]\ \ \footnotemark[4]\ \ \footnotemark[1]}
\date{}
\begin{document}
\maketitle

\begin{abstract}

The transition between strong and weak Allee effects in prey provides a simple regime shift in ecology. A deteriorating environment changes weak Allee effects into strong ones. In this paper, we study the interplay between the functional response of Holling type IV and both strong and weak Allee effects. The model investigated here presents complex dynamics and high codimension bifurcations. In particular, nilpotent cusp bifurcation, nilpotent saddle bifurcation and degenerate Hopf bifurcation of codimension 3 are completely analyzed, and the existence of homoclinic and heteroclinic loops are proven. Remarkably it is the first time that three limit cycles are discovered in predator-prey models with Allee effects. It turns out that strong Allee effects destabilize population dynamics, induce more regime shifts, decrease establishment likelihood of both species, increase vulnerability of ecosystem to collapse, while weak Allee effects promote sustained oscillations between predators and preys compared to systems without Allee effects. The theory developed here provides a sound foundation for understanding predator-prey interactions and biodiversity of species in natural systems.



\end{abstract}

{\bf Keywords.} {\small depensation; cusp of order 3; homoclinic loop; Li\'enard system;  heteroclinic loop; nilpotent saddle bifurcation.}

\footnotetext[2]{Department of Mathematics and Statistics, The University of Toledo, Toledo, OH 43606, USA.}
\footnotetext[4]{Research of C. Shan was partially supported by Simons Foundation-Collaboration Grants for Mathematicians 523360.}
\footnotetext[1]{Corresponding author. {\it E-mail address:} chunhua.shan@utoledo.edu (C. Shan).}

\section{Introduction}\label{introduction}

Predator-prey models have been extensively studied by biologists and mathematicians to understand the population dynamics of two or more species among which predation occurs. In the literature, various functions have been used to model the predator response. However, the prey population is generally assumed to follow the logistic growth in the absence of predators. In contrast, abundant observational data have shown that Allee effect or depensation occurs in small or sparse populations, where species population is reduced at low density \cite{allee, allee1, Davis, Johnson, allee2, allee4, allee3}. In this paper we consider a predator-prey model with Holling type IV functional response and Allee effects in preys, which is given by
\begin{equation}  \label{main}
\left\{\begin{array}{l}
\displaystyle \frac{dx}{dt}=\displaystyle rx(K-x)(x-A)-p(x)y,\\
\displaystyle \frac{dy}{dt}=\displaystyle y(-d+cp(x)),\\
 x(0)\geq 0, \ y(0)\geq 0,
\end{array}\right.
\end{equation}
where $x$ and $y$ represent the densities of prey and predator populations, respectively, and $$p(x):=\frac{mx}{ax^2+bx+1}, \ \ a>0, \ b>-2\sqrt{a}, \ m>0,$$ is the generalized Holling type IV functional response \cite{Andrews, ZCW}.

In this model, $r$ denotes the intrinsic growth rate of the prey population and $K$ denotes the environmental carrying capacity for the prey. The constant of proportionality is denoted by $c$ and the natural death rate of the predators is represented by $d$. The parameters $r, K, c$ and $d$ are assumed to be positive. The response function $p(x)$ of Holling type IV is also known as Monod-Haldane response function \cite{Andrews}. It is positive when $x>0$, and increasing for $x$ between $0$ and $\frac{1}{\sqrt{a}}$. The function attains the maximum value at $x=\frac{1}{\sqrt{a}}$, decreases for $x>\frac{1}{\sqrt{a}}$ and approaches to zero as $x$ tends to infinity. This response function describes the phenomenon that preys can better disguise, protect and defend themselves against predators when their population becomes large enough ~\cite{FW, group1}.

The parameter $A$ represents the threshold of Allee effect or depensation in preys, which was first discovered by W. C. Allee in 1931 who observed and demonstrated the connection between the growth rate and the number of individuals of goldfish in a tank ~\cite{allee, allee1}. In general, Allee effects describe the relationship between population density and individual fitness (per capita population growth rate). Such effects could be either strong or weak  for a given species ~\cite{allee2, allee4, allee3}. A strong Allee effect means that the per capita growth rate is negative when the density is zero (or in the limit as the density goes to zero), which indicates that there is a critical value in population size, below which the population will tend towards extinction and above which the population grows to its carrying capacity. This happens when $0<A<K$ in model \eqref{main}. The weak Allee effect is used to describe the case where the population growth rate is negatively affected by low population size, but the per capita population growth rate cannot go below zero (so population still grows at low population size), and this happens when $-K<A<0$. Note that $A=0$ is the transition between the weak and strong Allee effects.

If the term $x-A$ in model \eqref{main} is neglected, then the prey population follows the logistic growth rate and Allee effects are not present. For predator-prey system with logistic growth rate and Holling type IV functional response, Freedman and Wolkowicz proved the existence of homoclinic bifurcation, Hopf bifurcation and discussed the possibility of the existence of two limit cycles in 1988 ~\cite{wol}. In 2001, Ruan and Xiao considered the case $b=0$ in the response function and studied the Bogdanov-Takens bifurcation of codimension 2. They also found the set of parameters for which the system has no limit cycles or a unique limit cycle ~\cite{SD}. Later on, Zhu, Campbell and Wolkowicz showed that the system has rich dynamics such as Bogdanov-Takens bifurcation of codimension 3. Furthermore,  they studied the degenerate Hopf bifurcation and proved that its codimension is at least 2 ~\cite{ZCW}. In 2006, Xiao and Zhu analyzed the Hopf bifurcation and showed that the codimension of the degenerate Hopf bifurcation is exactly 2 ~\cite{XZ}. Besides that, various response functions have been introduced in  predator-prey systems with logistic growth rate in prey population \cite{type31, huang, LCR, X}. In recent years, there has been an increased activity in studying Allee effects, largely because of their potential role in explaining the extinctions of already endangered, rare or dramatically declining species. For instance, the impact of Allee effects on invasive species has been explored by Lewis \cite{Lewis}, Hastings \cite{Taylor}, Wang  \cite{allee3}. However, much less attention has been paid to Allee effects on predator-prey systems with Holling type functional response. In 2011, Wang, Shi and Wei analyzed a class of predator-prey models with a strong Allee effect in a prey population, and showed that there is a unique limit cycle for the Holling type II functional response  \cite{JJJ}.  In 2013, Lin, Liu and Lai analyzed the predator-prey model with weak Allee effect using the response function $p(x)=\frac{mx}{a+x^2}$, where partial results regarding limit cycles were obtained \cite{RSX}. 

In this paper we study the interplay between the generalized Holling type IV functional response and both strong and weak Allee effects. The model exhibits  complex dynamics and high codimension bifurcations which are thoroughly investigated. Nilpotent cusp bifurcation of codimension 3, nilpotent saddle bifurcation and degenerate Hopf bifurcation of codimension 3 are completely and rigorously analyzed. To the best of our knowledge, it is the first time that a degenerate Hopf bifurcation of codimension 3 is discovered and studied in predator-prey models. Singular orbits including homoclinic orbits and heteroclinic orbits are analyzed. It turns out that strong Allee effects will destabilize population dynamics, induce more regime shifts, decrease establishment likelihood of both species, increase vulnerability of the ecosystem to collapse, while weak Allee effects promote sustained oscillations between predators and preys compared to systems without Allee effects.

The paper is organized as follows. In section 2, we examine the existence and linear stability of the equilibria of the system \eqref{third}. Section 3 is devoted to the discussion of the cusp singularity of order 3 and to the development of a universal unfolding of the Bogdanov-Takens bifurcation of codimension 3. By converting the system \eqref{third} to a generalized Li\'enard system, we study the degenerate Hopf bifurcation of codimension 3 in section 4. The existence and the non-existence of periodic orbits are also discussed. In section 5 we analyze the nilpotent saddle bifurcation and explore the distinct dynamics induced by the strong Allee effects. The impact of both weak and strong Allee effects on the interaction of predators and preys is explored. Biological interpretations of the significance of Allee effects are also provided.

System \eqref{main} has 8 parameters and using the scaling
\begin{equation*}
    (t,x,y)\rightarrow{}\left(\frac{m^2c^2}{r}t,\frac{r}{mc}x,\frac{r}{mc^2}y\right)
\end{equation*}
we can eliminate parameters $r, m$ and $c$. Hence, the system we consider is
\begin{equation}\label{third}
\left\{\begin{array}{rcl}
\displaystyle \frac{dx}{dt}&=& \displaystyle x(K-x)(x-A)-p(x)y=p(x)(G(x)-y),\\
\displaystyle \frac{dy}{dt}&=&\displaystyle y(-d+p(x)),
\end{array}\right.
\end{equation}
where
\begin{equation*}
  p(x):=\frac{x}{ax^2+bx+1} \ \ \ \textrm{and} \ \ \  G(x):=(x-A)(K-x)(ax^2+bx+1),
\end{equation*}
with the parameters
\begin{equation*}
    (K, A, a, b, d)\rightarrow\left(\frac{mc}{r}K, \frac{mc}{r}A, \frac{m^2c^2}{r^2}a, \frac{mc}{r}b, \frac{r}{m^2c^2}d\right).
\end{equation*}
Here, parameters $K,a,d$ are positive, $b>-2\sqrt[]{a}$ and $-K<A<K$.

The $x$-axis, $y$-axis and the nonnegative cone are invariant with respect to the flow of the system \eqref{third}.  It is clear that $p(x)$ and $G(x)$ are bounded from above for $x\geq 0$ and so is the product $p(x)G(x)$. Therefore, there exists a constant $M>0$ such that for any point $(x,y)$ in the set $\{(x,y)|x+y=N, N\geq M\}$, we have $$p(x)G(x)<dy \ \ \text{and}\ \  \dfrac{dx}{dt}+\dfrac{dy}{dt}=p(x)G(x)-dy<0.$$ So all orbits of system \eqref{third} eventually approach, enter and stay in the compact set enclosed by $x$-axis, $y$-axis and the line $x+y=M$. Hence, the system \eqref{third} is biologically meaningful.


\section{Linear stability analysis}

\subsection{Equilibrium points}

System \eqref{third} always has two equilibrium points on the nonnegative $x$-axis: $E_0 = (0, 0)$, representing the extinction of both species and $E_K = (K, 0)$, representing the extinction of predator population. There is a third equilibrium $E_A=(A,0)$, representing the threshold of Allee effects. When $A$ is negative, $E_A$ does not exist. As $A$ varies from negative to positive, $E_A$ coalesces with $E_0$ at the origin (when $A=0$) and moves right along the positive $x$-axis.

We observe that $p(x)=d$ if and only if $h(x)=0$ where
\begin{equation}\label{hx}
    h(x)=adx^2+(bd-1)x+d=0.
\end{equation}
Let $\alpha \leq\beta$ be the two possible positive roots, where
\begin{equation} \label{roots}
    \alpha = \frac{1-bd-\sqrt{(bd-1)^2-4ad^2}}{2ad} \ \ \textrm{and}\ \  \beta = \frac{1-bd+\sqrt{(bd-1)^2-4ad^2}}{2ad}.
\end{equation}
Correspondingly, we have two possible equilibria $E_{\alpha}=(\alpha,G(\alpha))$ and $E_{\beta}=(\beta,G(\beta))$.

Let $d_m=\frac{1}{b+2\sqrt{a}}$. Indeed $\alpha$ and $\beta$ exist when $d\in (0, d_m)$.  As $d$ increases, $\alpha$ increases, $\beta$ decreases, $\alpha$ and $\beta$ coalesce at $x=\frac{1}{\sqrt{a}}$ when $d=d_m$. $\alpha$ and $\beta$ are complex when $d>d_m$. For biological meaning we only consider equilibrium points in the positive cone. From the graph of $G(x)$, one can see that $E_{\alpha}$ and/or $E_{\beta}$ are in the positive cone only if $\alpha$ and/or $\beta$ are in the open interval $(\max\{0, A\}, K)$. We sketch regions in which equilibrium points exist in Fig. \ref{two_figure}.


\subsection{Linear analysis}

By using variational matrix, we investigate stability of equilibrium points $E_0,\ E_A, \ E_K, \  E_\alpha$ and $E_\beta$. The variational matrix of system \eqref{third} at an equilibrium, say, $(x^*, y^*)$ is given by
\begin{equation} \label{vm}
\mathbb{V}(x^*,y^*)=
\begin{bmatrix}
p'(x^*)(G(x^*)-y^*)+p(x^*)G'(x^*) & -p(x^*) \\
p'(x^*)y^*  &  p(x^*)-d
\end{bmatrix}.
\end{equation}
If the equilibrium $(x^*, y^*)$ is on the $x$-axis, i.e., $x^*\in\{0, A, K\}$ and $y^*=0$,  $\mathbb{V}(x^*,y^*)$ has two eigenvalues $\lambda_1=p(x^*)G'(x^*)$ and $\lambda_2=p(x^*)-d$. For the equilibrium $(x^*, y^*)$ in the interior of the positive cone, i.e., $x^*\in\{\alpha, \beta\}$ and $y^*=G(x^*)$, we have
\begin{equation}\label{td}
\left.\begin{split}
   tr(\mathbb{V}(x^*, G(x^*)))&=p(x^*)G'(x^*),    \\
   det(\mathbb{V}(x^*, G(x^*)))&=p(x^*)p'(x^*)G(x^*)=\displaystyle \frac{x^*(1-a(x^*)^2)(K-x^*)(x^*-A)}{(a(x^*)^2+bx^*+1)^2}.
  \end{split}
  \right.
\end{equation}

Standard linear stability analysis and straightforward calculation lead to the results summarized in the two tables below, in which ``DNE'' means ``Does Not Exist'', ``attr.'' and ``repe.'' denote ``attracting'' and ``repelling'', respectively.

\begin{table}[H]
\begin{center}
\begin{tabular}{|c|c|c|c|c|c|c|}
\hline
Conditions & region & $E_0$ & $E_A$ & $E_K$ & $E_\alpha$ & $E_\beta$ \\ \hline
$\alpha<A<K<\beta$ & $V_0^{(1)}$ & stable node & unstable node & saddle  & DNE & DNE\\ \hline
$\alpha<\beta< A<K$ & $V_0^{(2)}$ & stable node & saddle & stable node & DNE & DNE \\ \hline
$A<K<\alpha<\beta$ & $V_0^{(3)}$ & stable node  & saddle & stable node & DNE & DNE\\ \hline
$\alpha<A<\beta<K$ & $V_\beta$ & stable node & unstable node & stable node &DNE  & saddle \\ \hline
$A<\alpha<K<\beta$& $V_\alpha$ & stable node  & saddle & saddle & attr. $(G'(\alpha)<0)$ & DNE \\
                   & &             &        &             & repe. $(G'(\alpha)>0)$ &      \\ \hline
$A<\alpha<\beta<K$& $V_{\alpha\beta}$ & stable node & saddle & stable node & attr. $(G'(\alpha)<0)$ & saddle\\
                  &  &             &        &             & repe. $(G'(\alpha)>0)$ &      \\ \hline
\end{tabular}\caption{Existence and stability of equilibria with strong Allee effect ($0<A<K$).}
\end{center}
\end{table}

From the stability analysis, we can see that a transcritical bifurcation involving $E_0$ and $E_A$ occurs when $A$ change from negative to positive. With strong Allee effects $E_0$ is a stable node, and preys will die out at low population size. While with weak Allee effects $E_0$ is a saddle, and the unstable manifold of $E_0$ is on the $x$-axis, so prey population still grows at low size. The stability of $E_{\alpha}$ depends on the sign of $G'(\alpha)$. The equilibrium point $E_{\beta}$, whenever it exists, is always a saddle point. 

\begin{table}[H]
\begin{center}
\begin{tabular}{|c|c|c|c|c|c|}
\hline
Conditions & Region &$E_0$ &  $E_K$ & $E_\alpha$ & $E_\beta$ \\ \hline
$K<\alpha<\beta$ & $V_0^{(3)}$ &  saddle & stable node   & DNE & DNE\\ \hline
$\alpha<K<\beta$ & $V_{\alpha}$ & saddle &  saddle & attr. $(G'(\alpha)<0)$ & DNE\\
                  &  &                   &             & repe. $(G'(\alpha)>0)$ &      \\ \hline
$\alpha<\beta<K$ & $V_{\alpha\beta}$ & saddle & stable node   & attr. $(G'(\alpha)<0)$ & saddle \\
                  &  &                   &             & repe. $(G'(\alpha)>0)$ &      \\ \hline
\end{tabular}\caption{Existence and stability of equilibra with weak Allee effect ($-K<A<0$).}
\end{center}
\end{table}

\subsection{Saddle-node and transcritical bifurcations}

Let $(d, A)\in (0, \infty)\times (-K,K)$. As $d$ changes, we obtain the following bifurcations. See Fig. \ref{two_figure}.

\begin{figure}[!htp]
\begin{center}
    \subfigure[{$K>\frac{1}{\sqrt{a}}$}]
    {\includegraphics[angle=0,width=0.36\textwidth]{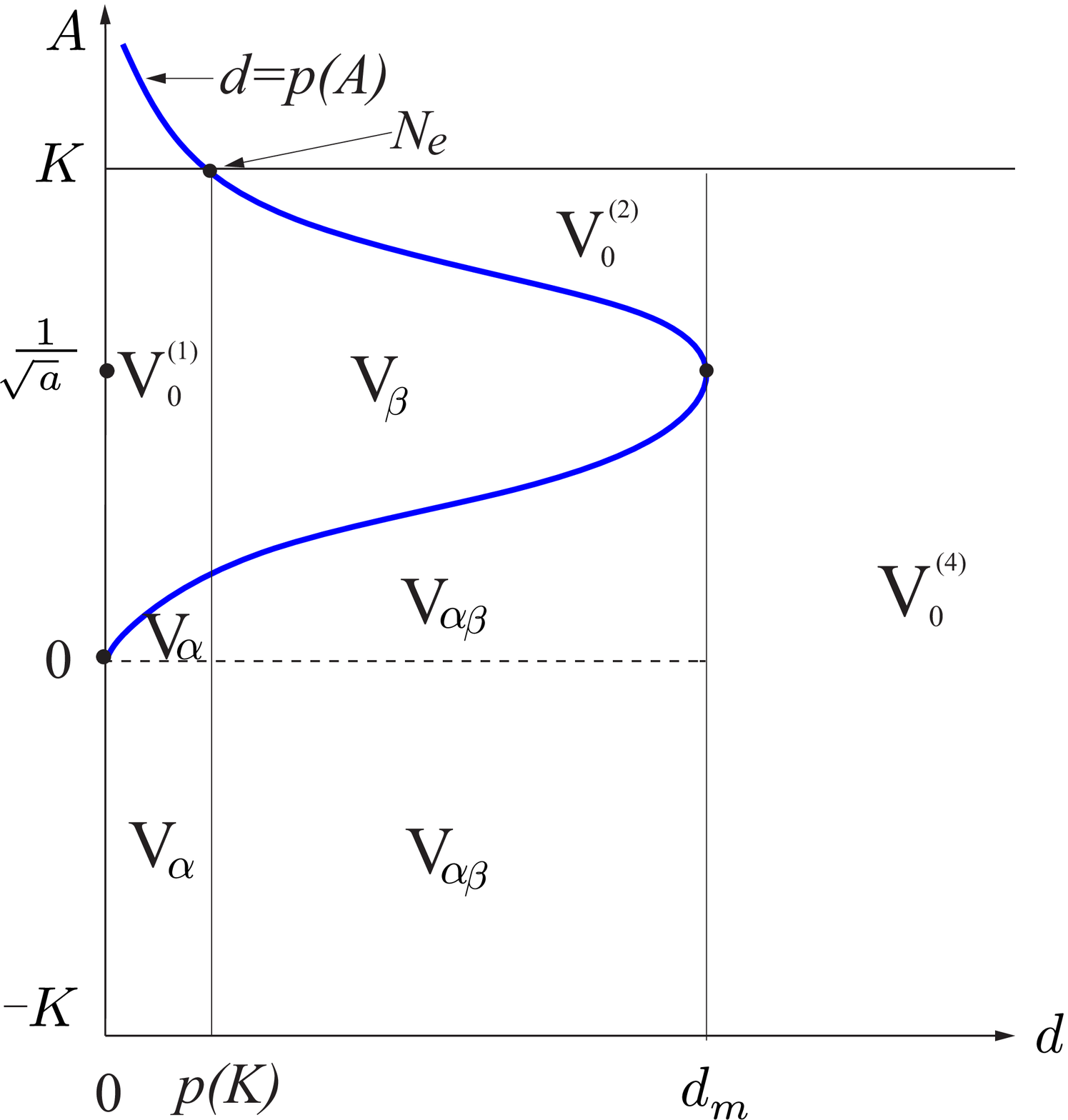}}\qquad \quad
\subfigure[{$K<\frac{1}{\sqrt{a}}$}]
    {\includegraphics[angle=0,width=0.36\textwidth]{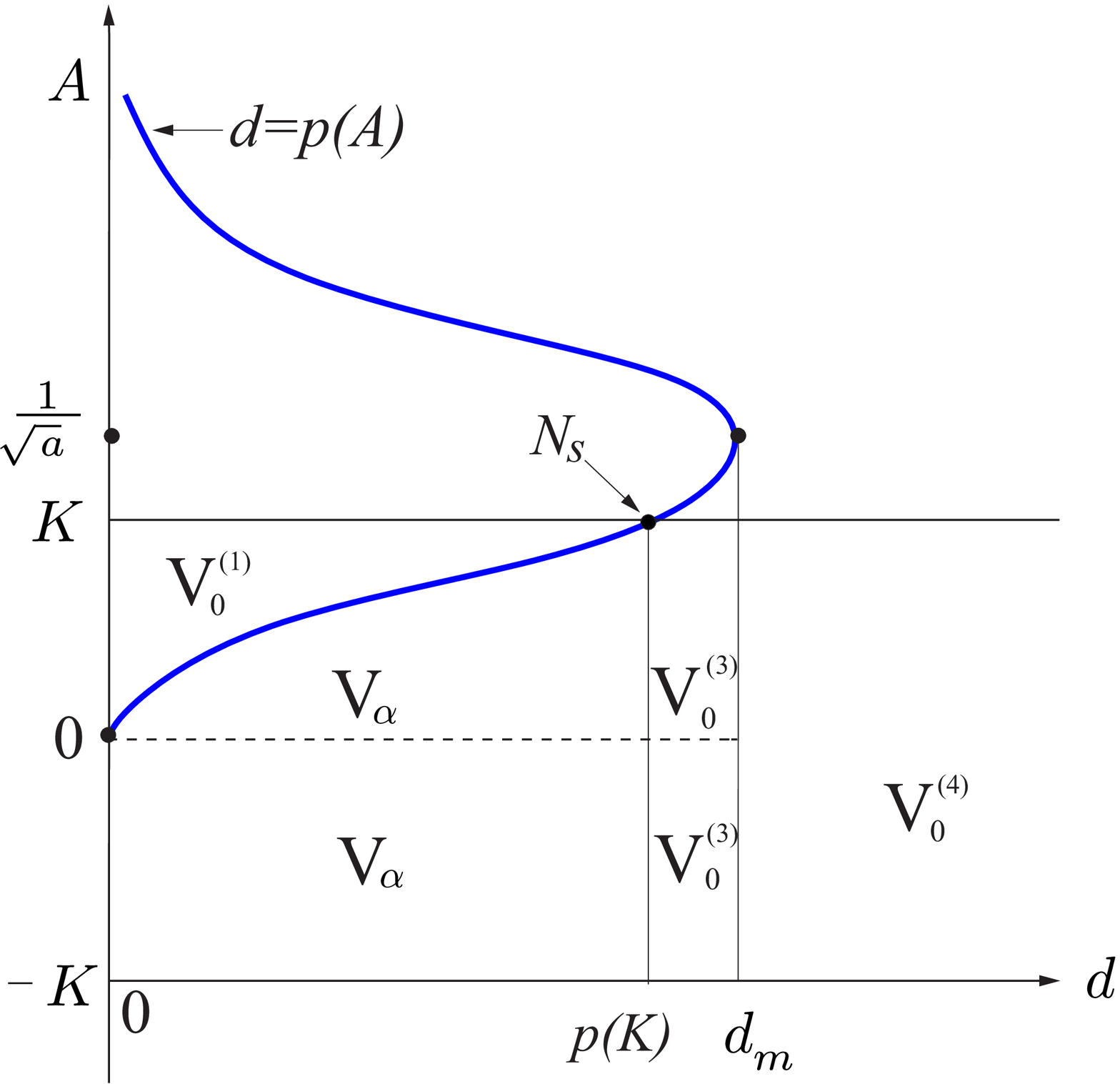}}
\caption{Existence and bifurcations of equilibrium points in $dA$-plane, where $E_\alpha$ and $E_\beta$ coexist in $V_{\alpha\beta}$. In $V_\alpha$ only $E_\alpha$ exists. In $V_\beta$ only $E_\beta$ exists. $E_\alpha$ and $E_\beta$ do not exist in $V_0^{(1)}\cup V_0^{(2)}\cup V_0^{(3)}\cup V_0^{(4)}$.}
\label{two_figure}
\end{center}\end{figure}

As $d$ increases, we obtain the following bifurcations involving $E_A$.
\begin{enumerate}
\item If $0<A<\frac{1}{\sqrt[]{a}}$, then a transcritical bifurcation involving $E_\alpha$ and $E_A$ occurs when $d=p(A)$. $E_A$ changes its stability from an unstable node to a saddle point.
\item If $A>\frac{1}{\sqrt[]{a}}$, then a transcritical bifurcation involving $E_\beta$ and $E_A$ occurs when $d=p(A)$. $E_A$ changes its stability from an unstable node to a saddle point.
\end{enumerate}

Also as $d$ increases, we obtain the following bifurcations involving $E_K$.
\begin{enumerate}
\item If $K<\frac{1}{\sqrt[]{a}}$, then a transcritical bifurcation involving $E_\alpha$ and $E_K$ occur when $d=p(K)$. $E_K$ changes its stability from a saddle point to a stable node.
\item If $K>\frac{1}{\sqrt[]{a}}$, then a transcritical bifurcation involving $E_\beta$ and $E_K$ occur when $d=p(K)$. $E_K$ changes its stability from a saddle point to a stable node.  A saddle-node bifurcation involving $E_\alpha$ and $ E_\beta$ occurs when $d=d_m$.
\item If $K=\frac{1}{\sqrt[]{a}}$, then $E_\alpha,\ E_\beta$ and $E_K$ coalesce at $E_K$ when $d=d_m$.
\end{enumerate}

\begin{remark} ``$N_e$'' and ``$N_s$'' in Fig. \ref{two_figure} stand for nilpotent elliptic point and nilpotent saddle, respectively. Local bifurcations near nilpotent elliptic point and nilpotent saddle are studied in Section 5.1.
\end{remark}


\section{Bogdanov-Takens bifurcation: cusp of order 3}

If system \eqref{third} has an interior equilibrium $(x, y)$ as a nilpotent singularity, the variational matrix \eqref{vm} should have double zero eigenvalues. This happens when $p'(x)=0$ and $G'(x)=0.$ From the first condition we have $\alpha=\frac{1}{\sqrt{a}}=\beta$ and
\begin{equation*}
     E_\alpha= E_\beta=\left(\frac{1}{\sqrt[]{a}},G\left(\frac{1}{\sqrt[]{a}}\right)\right):=\bar{E}.
\end{equation*}
Also from section 2.1, when $E_\alpha = E_\beta$ we have
\begin{equation*}
    d=\frac{1}{b+2 \ \sqrt[]{a}}= d_m.
\end{equation*}
From the second condition we have
\begin{equation*}
    G'(\alpha)= G'\left(\frac{1}{\sqrt[]{a}}\right)=\frac{[(Ka-2\sqrt{a})A-2K\sqrt{a}+3](2\ \sqrt[]{a}+b)}{a}=0.
\end{equation*}
Solve for $A$ and we obtain
\begin{equation} \label{Avalue}
    A=\frac{2K\sqrt[]{a}-3}{Ka-2\ \sqrt[]{a}}:= A^*, \ \  \left(K\neq \frac{2}{\sqrt[]{a}}\right).
\end{equation}
We need $-K<A^*<K$ and $A^*<\frac{1}{\sqrt[]{a}}$ so that $\bar{E}$ is in the positive cone, and this happens only if
 \begin{equation} \label{aK}
     \frac{1}{\sqrt[]{a}} < K < \frac{\sqrt[]{3}}{\sqrt[]{a}}.
 \end{equation}

Suppose that $A=A^*$ and $d=d_m$. Now we reduce system \eqref{third} to its normal form near $\bar{E}$. We first translate $\bar{E}$ to the origin by an affine map $x_1=x-\alpha$ and $y_1=-p(\alpha)(y-G(\alpha))$, 
then take the Taylor series about the origin, and obtain
\begin{equation}\label{taylor_series}
\left\{\begin{split}
   \dot{x}_1&=\displaystyle y_1+\frac{p(\alpha)G''(\alpha)}{2}x_1^2 + Q_{10}(x_1,y_1),    \\
    \dot{y}_1&=\displaystyle -\frac{p(\alpha) p''(\alpha)G(\alpha)}{2}x_1^2+ Q_{20}(x_1,y_1),
\end{split}\right.
\end{equation}
where $Q_{i0}=O(|(x_1,y_1)|^3)$ is $C^{\infty}$ in $(x_1,y_1)$ $(i=1,2)$.

Using the near identity transformation
\begin{equation*}
  u= x_1, \ \ \ v= y_1+\frac{p(\alpha)G''(\alpha)}{2}x_1^2 + Q_{10}(x_1,y_1),
\end{equation*}
we have
\begin{equation} \label{co2}
\left\{\begin{split}
    \dot{u}&=v,    \\
    \dot{v}&=\delta_1u^2+\delta_2uv+Q_{21}(u,v),
\end{split}\right.
\end{equation}
where $Q_{21}=O(|(u, v)|^3)$ is $C^{\infty}$ in $(u, v)$, and
\begin{equation*}
   \delta_1=-\frac{p(\alpha)G(\alpha)p''(\alpha)}{2}, \ \ \delta_2= p(\alpha)G''(\alpha).
\end{equation*}
Notice that $\delta_2 =0$ if and only if $G''(\alpha)=0$ (i.e., $x=\frac{1}{\sqrt{a}}$ is an inflection point of $G(x)$). Then solving the equation
\begin{equation*}
 G''(\alpha)= G''\left(\frac{1}{\sqrt[]{a}}\right)=\frac{2K^2a^{3/2}+(-6Kb+10) \ \sqrt[]{a}+2K^2ab-8Ka+6b}{Ka-2\ \sqrt[]{a}} =0.
\end{equation*}
for $b$ yields that
\begin{equation*}
    b=-\frac{\sqrt{a}(K^2a-4K\sqrt{a}+5)}{K^2a-3K\sqrt[]{a}+3}:=b^*.
\end{equation*}
Here $b^*>-2\ \sqrt[]{a}$ for any $a$ and $K$ with the property \eqref{aK}.
Thus, we have following theorem.

\begin{theorem}\label{thm_cusp2}
The equilibrium $\bar{E}$ is a cusp singularity of order 2 when $\displaystyle (d,A)=\left(d_m,A^*\right)$ for any $a$, $K$ satisfying condition \eqref{aK} and for any $b>-2 \ \sqrt[]{a}$ except at $ b= b^*$.
\end{theorem}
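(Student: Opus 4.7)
The plan is to verify that the pre-normal form \eqref{co2} obtained by the authors satisfies the two non-degeneracy conditions that characterize a cusp of order 2 (i.e.\ a codimension-2 Bogdanov--Takens singularity). The relevant classical criterion (see e.g.\ Perko, \emph{Differential Equations and Dynamical Systems}, or Chow--Li--Wang, \emph{Normal Forms and Bifurcation of Planar Vector Fields}) states that a planar $C^\infty$ vector field which, after a smooth coordinate change, takes the form $\dot u=v$, $\dot v=\delta_1 u^2+\delta_2 uv+O(|(u,v)|^3)$ at the origin defines a cusp of order 2 precisely when $\delta_1\neq 0$ and $\delta_2\neq 0$. Since the authors have already brought the system into this form, the task reduces to showing that both coefficients are non-zero under the stated hypotheses.

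For $\delta_1=-\tfrac12 p(\alpha)G(\alpha)p''(\alpha)$ I would argue as follows. First, $p(\alpha)>0$ because $\alpha=1/\sqrt a>0$ and $a\alpha^2+b\alpha+1=2+b/\sqrt a>0$ for $b>-2\sqrt a$. Second, $G(\alpha)>0$: condition \eqref{aK} together with $A=A^*$ guarantees $A^*<\alpha<K$, so the factors $(\alpha-A^*)$ and $(K-\alpha)$ of $G(\alpha)$ are positive, and the remaining factor $a\alpha^2+b\alpha+1$ was just shown to be positive. Third, differentiating $p'(x)=(1-ax^2)/(ax^2+bx+1)^2$ once more and evaluating at $x=\alpha$ (where the numerator vanishes, killing the second term) yields
\[
p''(\alpha)=-\frac{2a^{3/2}}{(2\sqrt a+b)^{2}}<0.
\]
Combining these three facts gives $\delta_1>0$ unconditionally in the admissible parameter range, so the first non-degeneracy condition holds automatically.

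For $\delta_2=p(\alpha)G''(\alpha)$, since $p(\alpha)>0$ its vanishing is equivalent to $G''(1/\sqrt a)=0$. The authors have already solved this equation for $b$, obtaining the unique root $b=b^*$, and have remarked that $b^*>-2\sqrt a$ whenever $a,K$ satisfy \eqref{aK}. Hence, for $b>-2\sqrt a$ with $b\neq b^*$, one has $\delta_2\neq 0$. Combining the two non-vanishing statements with the cited classical criterion yields the theorem. There is no substantive obstacle: once the pre-normal form \eqref{co2} is in hand, the proof reduces to two elementary algebraic verifications. The only mildly delicate point is checking $b^*+2\sqrt a>0$ throughout the window \eqref{aK}, which I would handle by expressing $b^*+2\sqrt a$ as a single rational function of $K\sqrt a$ and inspecting its sign on the interval $(1,\sqrt 3)$.
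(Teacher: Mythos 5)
Your proposal is correct and follows essentially the same route as the paper: it relies on the pre-normal form \eqref{co2} with $\delta_1=-\tfrac12 p(\alpha)G(\alpha)p''(\alpha)$ and $\delta_2=p(\alpha)G''(\alpha)$, and identifies the exceptional value $b=b^*$ as the unique root of $G''(1/\sqrt a)=0$ in the admissible range. The only difference is that you spell out the verification $\delta_1>0$ (via $p(\alpha)>0$, $G(\alpha)>0$, $p''(\alpha)=-2a^{3/2}/(2\sqrt a+b)^2<0$), which the paper leaves implicit; this is a welcome but minor addition.
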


\begin{remark}  It is required that $\displaystyle K\neq\frac{2}{\sqrt{a}}$ in Eq. \eqref{Avalue}.  If $\displaystyle K=\frac{2}{\sqrt{a}}$, $\bar{E}$ is not a cusp singularity since
$$G'\left(\frac{1}{\sqrt{a}}\right)=\frac{b+2\ \sqrt{a}}{a}\neq 0.$$
\end{remark}

Note that when $b=b^*$ we have $\delta_2=0$. Hence, $\bar{E}$ is more degenerate. Here, we will show that the order of the cusp singularity $\bar{E}$ is exactly 3 when $b=b^*$.
\begin{theorem}\label{thm_cusp3}
The equilibrium $\bar{E}$ is a cusp singularity of order 3 if
\begin{equation} \label{point}
    (d,A,b)=\left(\frac{K^2a-3K\sqrt[]{a}+3}{\ \sqrt[]{a}(K \ \sqrt[]{a}-1)^2}, \frac{2K\sqrt{a}-3}{\sqrt{a}(K\sqrt{a}-2)}, -\frac{\sqrt{a}(K^2a-4K\sqrt{a}+5)}{K^2a-3K\sqrt[]{a}+3} \right):=(d_m,A^*,b^*),
\end{equation}
for any $a$ and $K$ satisfying condition \eqref{aK}.
\end{theorem}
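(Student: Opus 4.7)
The plan is to substitute the critical values $(d,A,b)=(d_m,A^*,b^*)$ into system \eqref{co2} and show that after one further round of normal-form reduction, the resulting $u^2v$ coefficient is nonzero; the $\delta_2=0$ condition is already guaranteed by the choice $b=b^*$ (as noted in the paragraph preceding the theorem), so Theorem \ref{thm_cusp2} fails but the stronger codimension-3 form holds.

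First, I would extend the Taylor expansion in \eqref{taylor_series} by one order, keeping all cubic monomials. Propagating them through the near-identity change used to reach \eqref{co2}, I expect to obtain a system of the form
\begin{equation*}
\dot u = v,\qquad \dot v = \delta_1 u^2 + c_{30}u^3 + c_{21}u^2v + c_{12}uv^2 + c_{03}v^3 + O(\|(u,v)\|^4),
\end{equation*}
where the coefficients $c_{ij}$ are polynomials in the derivatives of $p$ and $G$ at $\alpha=1/\sqrt{a}$ evaluated at $(A,b)=(A^*,b^*)$. Next I would perform a second near-identity transformation $u\mapsto u+\varphi_2(u,v)$, $v\mapsto v+\psi_2(u,v)$ with $\varphi_2,\psi_2$ homogeneous of degree~2, chosen so as to preserve the $\dot u=v$ structure and kill the non-resonant monomials $u^3$, $uv^2$, $v^3$ at cubic order (optionally absorbing a residual contribution into a small state-dependent time rescaling). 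The single resonant cubic term $u^2v$ cannot be removed, and its coefficient becomes the new invariant $\eta=\eta(a,K)$ that combines $c_{21}$ with corrections coming from $\delta_1$ times the generator coefficients. After this step the system is in the canonical cusp-of-order-3 normal form
\begin{equation*}
\dot u = v,\qquad \dot v = \delta_1 u^2 + \eta\, u^2 v + O(\|(u,v)\|^4),
\end{equation*}
and by the standard classification it suffices to verify that $\delta_1\neq 0$ and $\eta\neq 0$ under \eqref{aK}. Since $p(\alpha)>0$, $G(\alpha)>0$ and $p''(\alpha)=-2a^{3/2}\neq 0$, the coefficient $\delta_1=-\tfrac12 p(\alpha)G(\alpha)p''(\alpha)$ is automatically nonzero.

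The decisive step is therefore the explicit computation and nonvanishing of $\eta$. I would carry out the Taylor expansion and both near-identity transformations symbolically, substitute $A=A^*(a,K)$ and $b=b^*(a,K)$ at the very end, and clear denominators; the outcome is a rational function of $a$ and $K$. Because the admissible region \eqref{aK} only constrains the product $K\sqrt{a}$, I expect the substitution $s:=K\sqrt{a}\in(1,\sqrt{3})$ to reduce $\eta$ (up to a manifestly nonzero prefactor such as a power of $K\sqrt{a}-2$ or of the denominators in $A^*$, $b^*$) to a one-variable rational expression in $s$ whose numerator is a polynomial. Nonvanishing of this polynomial on the interval $(1,\sqrt{3})$ can then be checked by Sturm's algorithm or by a direct sign analysis at the endpoints combined with monotonicity.

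The main obstacle is purely computational: the coefficients $c_{ij}$ involve $p^{(k)}(\alpha)$ and $G^{(k)}(\alpha)$ up to $k=4$, and after substituting the long rational expressions for $A^*$ and $b^*$ the intermediate formulas become very bulky, so the whole reduction is best automated in a computer algebra system. The conceptual content is routine normal-form theory; the delicate part is organizing the algebra so that the final rational function $\eta(a,K)$ is compact enough to certify $\eta\neq 0$ throughout the two-parameter region \eqref{aK}, and in particular to confirm that the excluded value $K=2/\sqrt{a}$ (which falls inside \eqref{aK}) does not accidentally force $\eta=0$.
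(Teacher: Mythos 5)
The central step of your proposal is not correct: a cusp of order 3 is not detected by the coefficient of $u^2v$. For the nilpotent cusp $\dot u=v$, $\dot v=u^2+\cdots$, the monomial $u^2v$ in $\dot v$ is removable under $C^\infty$ equivalence (for instance, rescaling time by $1+cv$ and then restoring the $\dot u=v$ structure shifts its coefficient by $3c$, so it can be annihilated), hence its nonvanishing is neither necessary nor sufficient for the singularity to have order exactly~3. The correct normal form, due to Dumortier--Roussarie--Sotomayor and used in the paper (system \eqref{co3}), is $\dot v=u^2+\zeta u^3v+O(|u,v|^5)$, with the genericity condition $\zeta\neq 0$ placed on the resonant \emph{quartic} monomial $u^3v$. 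Consequently your plan to extend the Taylor expansion ``by one order, keeping all cubic monomials'' stops one order too early: $\zeta$ depends on the degree-4 coefficients of the expansion as well as on products of degree-3 coefficients (see formula \eqref{coeff}, which involves $k_{40}$ and $m_{31}$ together with $k_{30}m_{30}$ and $m_{30}m_{21}$), so the quartic terms of \eqref{taylor_series} must be retained and propagated through the reduction. This is precisely what the paper does via Proposition \ref{t1}; with your truncation the ``decisive'' quantity $\eta$ you propose to compute is not an invariant of the singularity at all.

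The parts of your plan that do survive: $\delta_1\neq 0$ is indeed automatic for the reasons you give, and your idea of collapsing the final invariant to a one-variable rational function of $s=K\sqrt{a}\in(1,\sqrt{3})$ is exactly how the paper certifies $\zeta<0$ (the numerator polynomial $3\ell^4-22\ell^3+62\ell^2-78\ell+39$ is positive on that interval, and the remaining factors have fixed signs). A further small slip: $K=2/\sqrt{a}$ does not lie inside the region \eqref{aK}, since $\sqrt{3}<2$; that value is excluded already at the level of the definition of $A^*$ and never threatens the argument.
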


It has been shown in \cite{DRS} that any system with a cusp singularity of order 3 is locally $C^{\infty}$ equivalent to system
\begin{equation}
\left\{\begin{split} \label{co3}
    \dot{x}&= y,    \\
    \dot{y}&= x^2+\zeta x^3y+Q(x,y),
\end{split}\right.
\end{equation}
where $\zeta\neq 0$ and $Q(x,y)= O(|x,y|^5)$. So we need show that there exists a $C^{\infty}$ diffeomorphism which changes system \eqref{third} to system \eqref{co3} near $\bar{E}$. To achieve that we need the following proposition.

\begin{proposition}\label{t1}
System
\begin{equation} \label{5.3}
\left\{\begin{split}
    \dot{x}&=y+k_{30}x^3+k_{21}x^2y+k_{40}x^4+k_{31}x^3y + R_{10}(x,y),    \\
    \dot{y}&=m_{20}x^2+m_{30}x^3+m_{21}x^2y+m_{40}x^4+m_{31}x^3y + R_{20}(x,y),
\end{split}\right.
\end{equation}
is $C^{\infty}$ equivalent to system \eqref{co3} with
 \begin{equation} \label{coeff}
     \zeta=\frac{4k_{40}m_{20}+m_{20}m_{31}-3k_{30}m_{30}-m_{30}m_{21}}{m_{20}^4},
 \end{equation}
where $R_{i0}=O(|(x,y)|^5)$ is $C^{\infty}$ in $(x,y)$ $(i=1,2)$.
\end{proposition}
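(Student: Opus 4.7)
The plan is to construct an explicit $C^\infty$-equivalence taking system (3.6) into the cusp-of-order-3 normal form (3.5) and to compute the final coefficient of $x^3y$ in terms of the coefficients of (3.6); after a concluding linear rescaling this coefficient equals $\zeta$ and should coincide with (3.8).

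First, I would apply the translation $y\mapsto y + k_{30}x^3 + k_{40}x^4$, which cancels the pure $x^3$ and $x^4$ monomials from the right-hand side of $\dot x$. By the chain rule it adds $(3k_{30}x^2 + 4k_{40}x^3)\dot x = (3k_{30}x^2 + 4k_{40}x^3)y+O(5)$ to $\dot y$, so the $x^2y$- and $x^3y$-coefficients become $m_{21}+3k_{30}$ and $m_{31}+4k_{40}$, while the pure $x^k$-coefficients are unchanged modulo $O(5)$. Then the Euler time reparameterization $d\tau=(1+k_{21}x^2+k_{31}x^3)\,dt$ reduces $\dot x$ to $y+O(5)$; dividing $\dot y$ by the same factor changes only the $x^4$-coefficient at degrees $\leq 4$ and leaves the $x^3y$-coefficient at $m_{31}+4k_{40}$.

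At this stage the equations read $\dot u=v+O(5)$ and $\dot v=m_{20}u^2+m_{30}u^3+A_4 u^4+B_2 u^2v+B_3 u^3v+O(5)$, where $A_4=m_{40}-m_{20}k_{21}$, $B_2=m_{21}+3k_{30}$, $B_3=m_{31}+4k_{40}$. To kill $m_{30}u^3$ and $A_4 u^4$ I would use the Li\'enard-compatible diffeomorphism $u=\phi(U)=U+\alpha_2 U^2+\alpha_3 U^3$ together with a compensating time rescaling $d\sigma=g(u)\,d\tau$ chosen so that $g(\phi(U))\phi'(U)\equiv 1$; this preserves $dU/d\sigma=V$ and turns the right-hand side of the second equation into $\phi'(U)\,f(\phi(U),V)$. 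A fourth-order expansion shows that $\alpha_2=-m_{30}/(4m_{20})$ and a correspondingly fixed $\alpha_3$ annihilate the $U^3$- and $U^4$-coefficients of $\dot V$, while the $U^3V$-coefficient is updated from $B_3$ to
\[
B_3 + 4\alpha_2 B_2 = \frac{m_{20}(m_{31}+4k_{40})-m_{30}(m_{21}+3k_{30})}{m_{20}}.
\]
The surviving $U^2V$-monomial (which is absent from (3.5)) is then removed by a further near-identity change whose effect on the $U^3V$-coefficient is $O(5)$; this is a standard application of the resonance analysis for the nilpotent Jordan block $(0,1;0,0)$, as in \cite{DRS}.

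Finally, the linear rescaling $(U,V)\mapsto(m_{20}U,m_{20}V)$ normalizes the $U^2$-coefficient of $\dot V$ to $1$ and divides the $U^3V$-coefficient by $m_{20}^3$, producing
\[
\zeta=\frac{m_{20}(m_{31}+4k_{40})-m_{30}(m_{21}+3k_{30})}{m_{20}^4}=\frac{4k_{40}m_{20}+m_{20}m_{31}-3k_{30}m_{30}-m_{30}m_{21}}{m_{20}^4},
\]
which matches (3.8). The principal obstacle is the bookkeeping in the Li\'enard-type step: one must expand $\phi'(U)f(\phi(U),V)$ to fourth order, verify that the shift of the $U^3V$-coefficient is exactly $4\alpha_2 B_2$, and confirm that the removal of the residual $U^2V$-term does not perturb the final value of $\zeta$.
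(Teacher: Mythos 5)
Your argument is correct and follows the same overall strategy as the paper's proof: a sequence of near-identity changes of variables and time reparametrizations reducing \eqref{5.3} to the normal form \eqref{co3}, tracking only the coefficients that survive into $\zeta$. The substantive difference is where the work is done. The paper's proof is two lines long because, after removing $k_{30},k_{21},k_{40},k_{31}$ from $\dot x$ and rescaling by $m_{20}$, it simply invokes Proposition 5.3 of \cite{LCR} to produce \eqref{coeff}; you instead carry out that final reduction by hand, via the Li\'enard-type substitution $x=\phi(U)$ with $\alpha_2=-m_{30}/(4m_{20})$ and the resulting shift $B_3\mapsto B_3+4\alpha_2 B_2$, and your explicit value $\bigl(m_{20}(m_{31}+4k_{40})-m_{30}(m_{21}+3k_{30})\bigr)/m_{20}$, divided by $m_{20}^3$ after the linear rescaling, reproduces \eqref{coeff} exactly, so your self-contained route independently confirms the cited formula. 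A second, cosmetic difference: you remove the $k_{21}x^2y+k_{31}x^3y$ terms in $\dot x$ by the time reparametrization $d\tau=(1+k_{21}x^2+k_{31}x^3)\,dt$, whereas the paper uses the coordinate change $u=x-\tfrac{k_{21}}{3}x^3-\tfrac{k_{31}}{4}x^4$; these give different intermediate $x^4$-coefficients ($m_{40}-m_{20}k_{21}$ versus $(2m_{20}k_{21}+3m_{40})/3$), which is immaterial since the $x^4$-term is annihilated in the Li\'enard step and never enters $\zeta$. The only point you assert without detail---that the residual $x^2y$-term can be removed without disturbing the $x^3y$-coefficient---is standard and is in fact exactly Step 6 of the paper's proof of Theorem \ref{BT3_thm} (the change $y\mapsto y-\tfrac{B_2}{3}y^2$ with time factor $1-\tfrac{B_2}{3}y$, which only generates terms of higher quasi-degree), so no gap results.
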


\begin{proof} See Appendix I.
\end{proof}

 \noindent
 \textbf{Proof of Theorem \ref{thm_cusp3}.} When $(d, A, b)=(d_m, A^*, b^*)$, system \eqref{taylor_series} becomes
\begin{equation*}
\left\{\begin{split}
    \dot{x}_1&=\displaystyle y_1+\frac{p(\alpha)G'''(\alpha)}{6}x_1^3+\frac{p''(\alpha)}{2p(\alpha)}x_1^2y_1+\frac{p(\alpha)G^{(4)}(\alpha)}{24}x_1^4+\frac{p'''(\alpha)}{6p(\alpha)}x_1^3y_1 + Q_{12}(x_1,y_1),    \\
    \dot{y}_1&=\displaystyle -\frac{p(\alpha)p''(\alpha)G(\alpha)}{2}x_1^2-\frac{p(\alpha)p'''(\alpha)G(\alpha)}{6}x_1^3+\frac{p''(\alpha)}{2}x_1^2y_1-\frac{p(\alpha)p^{(4)}(\alpha)G(\alpha)}{24}x_1^4\\
    &\ \ \ \ \ +\frac{p'''(\alpha)}{6}x_1^3y_1+ Q_{22}(x_1,y_1),
\end{split}\right.
\end{equation*}
where $Q_{i2}=O(|(x_1,y_1)|^5)$ is $C^{\infty}$ in $(x_1,y_1)$ $(i=1,2)$. 
Then by proposition \ref{t1} the system above is $C^\infty$ equivalent to system \eqref{co3} with
 \begin{align*}
\zeta &=\frac{4}{3G'''(\alpha)p^2(\alpha)(p''(\alpha))^4}\left(G'''(\alpha)p'''(\alpha)-G^{(4)}(\alpha)p''(\alpha)\right)  \\
  &=-a^{7/2} \left(\frac{(K\sqrt[]{a}-2)^2(3(K\sqrt[]{a})^4-22(K\sqrt[]{a})^3+62(K\sqrt[]{a})^2-78(K\sqrt[]{a})+39)}{((K\sqrt[]{a})^2-3(K\sqrt[]{a})+3)^6(K\sqrt[]{a}-1)}\right).
\end{align*}
For the sake of simplicity, define $\ell=K\sqrt[]{a}$. Then by condition \eqref{aK}, we have $1<\ell<\sqrt[]{3}$, and
$$\zeta=-\frac{a^{7/2}(\ell-2)^2(3\ell^4-22\ell^3+62\ell^2-78\ell+39)}{(\ell^2-3\ell+3)^6(\ell-1)}<0,$$
for any $1<\ell<\sqrt{3}$. Hence, $\bar{E}$ is a cusp singularity of order 3. \qed

In the following we will take $(d, A, b)$ as bifurcation parameters and develop a universal unfolding for cusp singularity of order 3 near the point $(d_m, A^*, b^*)$.  Let \begin{equation} \label{bt3points}
    b=b^*+\epsilon_1, \ A=A^*+\epsilon_2, \ d=d_m+\epsilon_3.
\end{equation}
Then the perturbed system is
\begin{equation} \label{new}
\left\{\begin{split}
   \dot{x}&=\displaystyle x(x-A^*-\epsilon_2)(K-x)-\frac{xy}{ax^2+(b^*+\epsilon_1)x+1}, \\
   \dot{y}&=\displaystyle y\left(\frac{x}{ax^2+(b^*+\epsilon_1)x+1}-d_m-\epsilon_3\right).
\end{split}\right.
\end{equation}
\begin{theorem}\label{BT3_thm}
For $\epsilon=(\epsilon_1,\epsilon_2,\epsilon_3)$ sufficiently small, system \eqref{new} is $C^\infty$ equivalent to
\begin{equation}\label{normal}
\left\{\begin{split}
   \dot{x}&=y, \\
   \dot{y}&=\eta_1(\epsilon)+\eta_2(\epsilon)y+\eta_3(\epsilon)xy+x^2-x^3y+R(x,y,\epsilon),
\end{split}\right.
\end{equation}
with
\begin{equation}\label{rest}
R(x,y,\epsilon)=y^2O(|x,y|^2)+O(|x,y|^5)+O(\epsilon)(O(y^2)+O(|x,y|^3))+O(\epsilon^2)O(|x,y|),
\end{equation}
where \begin{equation}\label{parameter}
\frac{\partial(\eta_1,\eta_2,\eta_3)}{\partial(\epsilon_1,\epsilon_2,\epsilon_3)}_{\big\rvert\epsilon=(0,0,0)} \neq 0.\end{equation}
Therefore, system \eqref{new} is a universal unfolding of Bogdanov-Taken bifurcation of codimension 3.
\end{theorem}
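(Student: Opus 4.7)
The plan is to carry the parameters $\epsilon=(\epsilon_1,\epsilon_2,\epsilon_3)$ through the reduction that was used at $\epsilon=0$ in Theorem~\ref{thm_cusp3} and Proposition~\ref{t1}, and then verify the transversality condition \eqref{parameter} by direct computation. Once a smooth family of coordinate changes bringing \eqref{new} to a form with the leading nonlinearity $x^2-x^3y$ plus three independent unfolding terms $\eta_1$, $\eta_2y$, $\eta_3xy$ is produced, the classification result in \cite{DRS} guarantees that the family is a versal unfolding of the cusp of order 3.

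First I would translate the cusp point to the origin via $x_1=x-1/\sqrt{a}$ and $y_1=-p(1/\sqrt{a})(y-G(1/\sqrt{a}))$, where $p$ and $G$ are evaluated at the unperturbed parameters $(b^*,A^*,d_m)$. Expanding the right-hand side of \eqref{new} in a Taylor series in $(x_1,y_1)$ and in $\epsilon$ reproduces, at $\epsilon=0$, the system in the proof of Theorem~\ref{thm_cusp3} (to which Proposition~\ref{t1} applies and yields $\zeta<0$). For $\epsilon\neq 0$ the expansion also picks up a constant term in $\dot y_1$ (because $(0,0)$ ceases to be an equilibrium), $\epsilon$-dependent linear terms, and $\epsilon$-corrections to every polynomial coefficient. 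Applying the $\epsilon$-parameterized version of the near-identity transformation of Proposition~\ref{t1} (which remains smooth in $\epsilon$ because all denominators appearing there are nonzero at $\epsilon=0$), followed by smooth rescalings of $x$, $y$ and $t$ that normalize the coefficients of $x^2$ and $x^3y$ to $1$ and $-1$ respectively, produces the normal form \eqref{normal} with smooth coefficients $\eta_j(\epsilon)$ vanishing at $\epsilon=0$, and remainder $R(x,y,\epsilon)$ of the form \eqref{rest}.

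The decisive step, and the one I expect to be the main obstacle, is the transversality condition \eqref{parameter}. Heuristically, $\eta_1$ is generated by the fact that the cusp equilibrium moves under perturbation and should depend nontrivially on the shifts $\epsilon_2$ (in $A$) and $\epsilon_3$ (in $d$); $\eta_2$ comes from the perturbed trace of the variational matrix, which picks up a contribution linear in $\epsilon_3$ via $p(\alpha)-d$; and $\eta_3$ is controlled by the $\epsilon_1$-derivative of $G''(1/\sqrt{a})$ at $b=b^*$, which is nonzero since $b^*$ was defined as a simple root of $G''(1/\sqrt{a})=0$. In practice I would linearize \eqref{new} in $\epsilon$, track the linear-in-$\epsilon$ parts through the reduction, and assemble the $3\times 3$ matrix
\begin{equation*}
\frac{\partial(\eta_1,\eta_2,\eta_3)}{\partial(\epsilon_1,\epsilon_2,\epsilon_3)}\bigg|_{\epsilon=0}.
\end{equation*}
Its entries will be rational functions of $a$ and $K$, and the above triangular-like structure suggests that its determinant factors into pieces that are each nonzero on the range $1<K\sqrt{a}<\sqrt{3}$ imposed by \eqref{aK}. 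Combining the nondegeneracy $\zeta<0$ from Theorem~\ref{thm_cusp3} with this transversality yields, by \cite{DRS}, that \eqref{new} is a universal unfolding of the cusp of order 3.
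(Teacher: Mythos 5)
Your proposal follows essentially the same route as the paper: translate the cusp to the origin, Taylor-expand while tracking the $\epsilon$-dependence, reduce to the normal form \eqref{normal} by a chain of near-identity transformations and rescalings, and then verify \eqref{parameter} by explicitly assembling the $3\times 3$ Jacobian (the paper finds $\det=-2(\ell-1)^2\rho^{4/5}/(a^2\sigma^2)\neq 0$ for $1<\ell<\sqrt{3}$). The only understatement is that a parameterized Proposition~\ref{t1} alone does not suffice: the perturbed system acquires constant, linear, $y^2$-, and $O(\epsilon)\,x^3,x^4$-terms whose removal occupies several additional explicit steps in the paper (including a differential-form substitution and the elimination of the $x^2y$-term), but these are routine and do not change the strategy you describe.
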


\begin{proof}
We will show that there exist a sequence of $C^\infty$ diffeomorphisms which convert system \eqref{new} to system \eqref{normal} and condition \eqref{parameter} is fulfilled. 

{\it Step 1.} Bring the cusp singularity $\left(\frac{1}{\sqrt[]{a}},\frac{(\ell-1)^4}{a(2-\ell)(\ell^2-3\ell+3)}\right)$ to the origin. Apply the translate
$$x_1=x-\frac{1}{\sqrt[]{a}}, \ \ \ y_1=y-\frac{(\ell-1)^4}{a(2-\ell)(\ell^2-3\ell+3)},$$
take the Taylor series about the origin, and we obtain
\begin{equation}
\left\{\begin{split}
   \dot{x}_1&=[k_{00}+k_{10}x_1+k_{20}x_1^2+k_{30}x_1^3+k_{40}x_1^4]+y_1[k_{01}+k_{21}x_1^2+k_{31}x_1^3]+Q_1(x_1,y_1), \\
   \dot{y}_1&=[m_{00}+m_{20}x_1^2+m_{30}x_1^3+m_{40}x_1^4]+y_1[m_{01}+m_{21}x_1^2+m_{31}x_1^3]+Q_2(x_1,y_1),
\end{split}\right.
\end{equation}
where
\begin{eqnarray*}
\left.\begin{array}{l}
  k_{00}=\frac{(\ell^2-3\ell+3)}{a^2(2-\ell)}\epsilon_1-\frac{(\ell-1)}{a}\epsilon_2+O(\epsilon^2), \hskip 1.7cm m_{00}=-\frac{(\ell^2-3\ell+3)}{a^2(2-\ell)}\epsilon_1-\frac{(\ell-1)^4}{a(\ell^3-3\ell+3)(2-\ell)}\epsilon_3+O(\epsilon^2), \\
  k_{10} =\frac{(2-\ell)}{\sqrt[]{a}}\epsilon_2+O(\epsilon^2),  \hskip 3.9cm
  m_{01}=-\frac{(\ell^2-3\ell+3)^2}{a(\ell-1)^4}\epsilon_1-\epsilon_3+O(\epsilon^2),\\
  k_{01} =\frac{-(\ell^2-3\ell+3)}{(\ell-1)^2\sqrt[]{a}}+\frac{(\ell^2-3\ell+3)^2}{(\ell-1)^4a}\epsilon_1+O(\epsilon^2),  \hskip 1.0cm
  m_{20}=\frac{-(\ell^2-3l+3)}{\sqrt[]{a}(2-\ell)}+\frac{2(\ell^2+3\ell+3)^2}{a(\ell-1)^2(2-\ell)}\epsilon_1+O(\epsilon^2),\\
  k_{20} =-\frac{2(\ell^2-3\ell+3)^2}{(2-\ell)(\ell-1)^2a}\epsilon_1+\epsilon_2+O(\epsilon^2),  \hskip 1.81cm
  m_{30}=\frac{(\ell^2-3\ell+3)}{(2-\ell)}-\frac{2(\ell^2-3\ell+3)^2}{\sqrt[]{a}(\ell-1)^2(2-\ell)}\epsilon_1+O(\epsilon^2),\\
  k_{30} =-\frac{(\ell^2-4\ell+5)}{(2-\ell)}+\frac{2(\ell^2-3\ell+3)^2}{\sqrt[]{a}(\ell-1)^2(2-\ell)}\epsilon_1+O(\epsilon^2),  \hskip 0.59cm
  m_{21}=-\frac{\sqrt[]{a}(\ell^2-3\ell+3)^2}{(\ell-1)^4}+\frac{2(\ell^2-3\ell+3)^3}{(\ell-1)^6}\epsilon_1+O(\epsilon^2),\\
  k_{21} =\frac{\sqrt[]{a}(\ell^2-3\ell+3)^2}{(\ell-1)^4}-\frac{2(\ell^2-3\ell+3)^3}{(\ell-1)^6}\epsilon_1+O(\epsilon^2),  \hskip 0.65cm
  m_{31}=\frac{a(\ell^2-3\ell+3)^2}{(\ell-1)^4}-\frac{2\sqrt[]{a}(\ell^2-3\ell+3)^3}{(\ell-1)^6}\epsilon_1+O(\epsilon^2),\\
  k_{31} =-\frac{a(\ell^2-3\ell+3)^2}{(\ell-1)^4}+\frac{2\sqrt[]{a}(\ell^2-3\ell+3)^3}{(\ell-1)^6}\epsilon_1 +O(\epsilon^2), \hskip 0.2cm
  m_{40}=\frac{\sqrt[]{a}(\ell^2-3\ell+3)}{(\ell-1)^2}-\frac{(\ell^2-3\ell+3)^2(\ell^2-5\ell+7)}{(\ell-1)^4(2-\ell)}\epsilon_1+O(\epsilon^2),\\
  k_{40} =\frac{\sqrt[]{a}(\ell^2-3\ell+3)}{-(\ell-1)^2}+\frac{(\ell^2-3\ell+3)^2(\ell^2-5\ell+7)}{(\ell-1)^4(2-\ell)}\epsilon_1+O(\epsilon^2),
\end{array}\right.
\end{eqnarray*}
and $Q_i(x_1,y_1)= O(|(x_1,y_1)|^5)$ is $C^\infty$ in $(x_1, y_1)$ ($i=1, 2$).

{\it Step 2.}  Apply the transformation $x_2=x_1$ and $y_2=\dot{x_1}$, and we obtain
\begin{equation}\label{step2}
\left\{\begin{split}
   \dot{x_2}&=y_2, \\
   \dot{y_2}&=[a_{00}+a_{10}x_2+a_{20}x_2^2+a_{30}x_2^3+a_{40}x_2^4]+y_2[a_{01}+a_{11}x_2+a_{21}x_2^2+a_{31}x_2^3]\\
            &\hskip 0.3cm+y_2^2[a_{12}x_2+a_{22}x_2^2]  +R(x_2,y_2,\epsilon),
\end{split}\right.
\end{equation}
where coefficients $a_{ij}$ are given in Appendix II, and $R(x_2,y_2,\epsilon)$ has the same property as $R(x,y,\epsilon)$ in Eq. \eqref{rest}. From now on we will use $R(x_i,y_i,\epsilon)$ in each step to denote terms which have same property as $R(x,y,\epsilon)$, but they are different functions.

{\it Step 3.} Remove $x_2y_2^2$ and $x_2^2y_2^2$-terms. Under the near identity transformation
$$\displaystyle x_3=x_2-\frac{a_{12}}{6}x_2^3-\frac{a_{22}}{12}x_2^4+O(|x_2,y_2|^5), \ \ y_3=y_2-\frac{a_{12}}{2}x_2^2y_2-\frac{a_{22}}{3}x_2^3y_2+O(|x_2,y_2|^5),$$
we obtain
\begin{equation} \label{step3}
\left\{\begin{split}
   \dot{x_3}&=y_3, \\
   \dot{y_3}&=[b_{00}+b_{10}x_3+b_{20}x_3^2+b_{30}x_3^3+b_{40}x_3^4]+y_3[b_{01}+b_{11}x_3+b_{21}x_3^2+b_{31}x_3^3]+ R(x_3,y_3, \epsilon).
\end{split}\right.
\end{equation}
where coefficients $b_{ij}$ are given in Appendix II.

{\it Step 4.} Reduce $x_3^3$ and $x_3^4$-terms in $O(\epsilon)$. Let\begin{equation} \label{11}
\left(b_{20}x_3^2+b_{30}x_3^3+b_{40}x_3^4\right)dx_3=b_{20}x_4^2dx_4.
\end{equation}

Note that $b_{20}=\frac{(\ell^2-3\ell+3)^2}{a(2-\ell)(\ell-1)^2}+O(\epsilon)>0$ for small $\epsilon$. We solve the differential form \eqref{11}, and obtain
\begin{equation*}
    x_3=\phi(x_4):=x_4-\frac{b_{30}}{4b_{20}}x_4^2-\left(\frac{3b_{40}}{5b_{20}}-\frac{3b_{30}^2}{16b_{20}^2}\right)x_4^3+O(x_4^4).
\end{equation*}

The differential form of system \eqref{step3} is
\begin{equation}\label{differential_1}
    y_3dy_3=[W_1(x_3,y_3)+(b_{20}x_3^2+b_{30}x_3^3+b_{40}x_3^4)+R(x_3,y_3, \epsilon)]dx_3,
\end{equation}
where \begin{equation*}
    W_1(x_3,y_3)=[b_{00}+b_{10}x_3]+y_3[b_{01}+b_{11}x_3+b_{21}x_3^2+b_{31}x_3^3].
\end{equation*}
Let $x_4=\phi^{-1}(x_3)$ and $y_4=y_3$. Then the differential form \eqref{differential_1} becomes
\begin{equation*}
\left.\begin{split}
    y_4dy_4&=[W_1(\phi(x_4), y_4)+R(\phi(x_4),y_4, \epsilon)]\phi ' (x_4)dx_4+b_{20}x_4^2dx_4\\
           &=[W_2(x_4, y_4)+R(x_4, y_4, \epsilon)]dx_4,
\end{split}\right.
\end{equation*}
where $W_2(x_4, y_4)=c_{00}+c_{10}x_4+c_{20}x_4^2+c_{30}x_4^3+c_{40}x_4^4+y_4[c_{01}+c_{11}x_4+c_{21}x_4^2+c_{31}x_4^3],$
and coefficients $c_{ij}$ are given in Appendix II. In particular, $c_{10}, c_{11}, c_{30}, c_{40}=O(\epsilon)$.

Note the terms $c_{30}x_4^3$ and $c_{40}x_4^4$ are of $O(\epsilon)$, so the differential form above is equivalent to
\begin{equation}\label{step4}
\left\{\begin{split}
   \dot{x_4}&=y_4, \\
   \dot{y_4}&=c_{00}+c_{10}x_4+c_{20}x_4^2+y_4[c_{01}+c_{11}x_4+c_{21}x_4^2+c_{31}x_4^3]+R(x_4,y_4,\epsilon).
\end{split}\right.
\end{equation}

{\it Step 5.} Rescale the coefficient of $x_4^2$, and remove $x_4$-term.

Notice that $c_{20}=\frac{(\ell^2-3\ell+3)^2}{a(2-\ell)(\ell-1)^2}+O(\epsilon)> 0$ for small $\epsilon$. Let $$x_5=x_4+\frac{c_{10}}{2c_{20}}, \ y_5=\frac{y_4}{\sqrt{c_{20}}},  \ \ \  \textrm{and} \ \ \ \tau=\sqrt{c_{20}}t,$$
rewrite $\tau$ as $t$, and we have
\begin{equation*}
\left\{\begin{split}
   \dot{x_5}&=y_5, \\
   \dot{y_5}&=d_{00}+x_5^2+y_5[d_{01}+d_{11}x_5+d_{21}x_5^2+d_{31}x_5^3] + R(x_5,y_5,\epsilon),
\end{split}\right.
\end{equation*}
where
\begin{eqnarray*}
\left.\begin{array}{l}
d_{00}=\dfrac{c_{00}}{c_{20}}+O(\epsilon^2), \ \ d_{01}=\dfrac{c_{01}}{\sqrt{c_{20}}}+O(\epsilon^2),\\
d_{11}=\dfrac{3c_{31}c_{10}^2-4c_{10}c_{20}c_{21}+4c_{11}c_{20}^2}{4c_{20}^2\sqrt{c_{20}}},\ d_{21}=\dfrac{2c_{20}c_{21}-3c_{10}c_{31}}{2c_{20}^2\sqrt{c_{20}}},\ d_{31}=\dfrac{c_{31}}{\sqrt{c_{20}}}.
\end{array}\right.
\end{eqnarray*}

{\it Step 6.} Remove the $x_5^2y_5$-term. Let $$x_6=x_5, \ \ y_6=y_5-\frac{d_{21}}{3}y_5^2 \ \ \  \textrm{and} \ \ \ t=\left(1-\frac{d_{21}}{3}y_5\right)\tau ,$$ and rewrite $\tau$ as $t$, and we have
\begin{equation}
\left\{\begin{split}
   \dot{x_6}&=y_6, \\
   \dot{y_6}&=\xi_{00}+x_6^2+y_6[\xi_{01}+\xi_{11}x_6+\xi_{31}x_6^3y_6]\\
            &\hskip 0.3cm +\xi_{02}y_6^2+\xi_{12}x_6y_6^2+\xi_{03}y_6^3+\xi_{13}x_6y_6^3+\xi_{22}x_6^2y_6^2+\xi_{04}y_6^4+R(x_6,y_6,\epsilon),
\end{split}\right.
\end{equation}
where
\begin{eqnarray*}
\left.\begin{array}{l}
\xi_{00}= d_{00}, \ \xi_{01}=d_{01}-d_{00}d_{21}, \ \xi_{11}=d_{11}, \ \xi_{31}=d_{31},\\
\xi_{02}=-\frac{d_{21}(d_{00}d_{21}+6d_{01})}{9}=O(\epsilon), \ \xi_{12}=-\frac{2d_{21}d_{11}}{3}=O(\epsilon), \ \xi_{03}=-\frac{2d_{21}^2(d_{00}d_{21}+3d_{01})}{27}=O(\epsilon).
\end{array}\right.
\end{eqnarray*}
Here $\xi_{02},\xi_{12},\xi_{03}=O(\epsilon)$ since $d_{00},d_{01}=O(\epsilon)$. Hence, we can put $y_6^2,x_6y_6^2,y_6^3$-terms in $R(x_6,y_6,\epsilon)$. Terms $x_6^2y_6^2,x_6y_6^3,y_6^4$ are also included in $R(x_6,y_6,\epsilon)$ since $x_6^2y_6^2,x_6y_6^3,y_6^4 \in O(y_6^2|x_6,y_6|^2)$. Therefore, we obtain
\begin{equation}  \label{step7}
\left\{\begin{split}
   \dot{x_6}&=y_6, \\
   \dot{y_6}&=\xi_{00}+x_6^2+y_6[\xi_{01}+\xi_{11}x_6+\xi_{31}x_6^3]+R(x_6,y_6,\epsilon).
\end{split}\right.
\end{equation}

{\it Step 7.} Rescale the coefficient of $x_6^3y_6$.

The coefficient $\xi_{31}=\rho+O(\epsilon)$, where
\begin{eqnarray*}
\left.\begin{array}{l}
\rho=-\frac{(3\ell^4-22\ell^3+62\ell^2-78\ell+39)a^{\frac{1}{2}}}{(\ell-1)^2(2-\ell)\sigma}<0, \ \ \text{and}\ \ \sigma=\frac{(\ell^2-3\ell+3)}{(\ell-1)\sqrt{a(2-\ell)}}>0
\end{array}\right.
\end{eqnarray*}
for all $1<\ell<\sqrt{3}$. Then $\xi_{31}<0$ for small $\epsilon$. Let $$x_8=\xi_{31}^{\frac{2}{5}}x_7,\ \   y_8=-\xi_{31}^{\frac{3}{5}}y_7 , \ \ t=-\xi_{31}^{-\frac{1}{5}}\tau.$$
Then system \eqref{step7} is converted to the system \eqref{normal} with
\begin{eqnarray*}
\left.\begin{array}{l}
    \eta_{1}= \xi_{31}^{\frac{4}{5}}\xi_{00}=
\frac{\rho^{\frac{4}{5}}}{a^{\frac{3}{2}}}\epsilon_1+\frac{(\ell-1)^4\rho^{\frac{4}{5}}}{a^{\frac{1}{2}}(\ell^2-3\ell+3)^2}\epsilon_3+O(\epsilon^2), \\
\eta_{2}=-\xi_{31}^{\frac{1}{5}}\xi_{01}=
-\frac{3\rho^{\frac{1}{5}}(\ell^2-4\ell+5)}{a^{\frac{3}{2}}(2-\ell)\sigma}\epsilon_1-\frac{\rho^{\frac{1}{5}}(2-\ell)}{a^{\frac{1}{2}}\sigma}\epsilon_2-\frac{3\rho^{\frac{1}{5}}(\ell^2-4\ell+5)(\ell-1)^4}{a^{\frac{1}{2}}(\ell^2-3\ell+3)^2(2-\ell)\sigma }\epsilon_3+O(\epsilon^2),\\
\eta_{3}=-\xi_{31}^{-\frac{1}{5}}\xi_{11}=\frac{(\ell^4-6\ell^3+18\ell^2-30\ell+21)}{2a(2-\ell)(\ell-1)^2\sigma\rho^{\frac{1}{5}} }\epsilon_1-\frac{(5\ell-9)(2-\ell)}{2(\ell-1)\sigma \rho^{\frac{1}{5}}}\epsilon_2-\frac{3(\ell^2-4\ell+5)(\ell-1)^4}{2(2-\ell)(\ell^2-3\ell+3)^2\sigma\rho^{\frac{1}{5}}}\epsilon_3+O(\epsilon^2).
\end{array}\right.
\end{eqnarray*}
Thus finally we have
\begin{align*}
 \displaystyle \frac{\partial(\eta_1,\eta_2,\eta_3)}{\partial(\epsilon_1,\epsilon_2,\epsilon_3)}_{\big|\epsilon=(0,0,0)}&=\rho^{\frac{4}{5}}\rho^{\frac{1}{5}}\rho^{\frac{-1}{5}}\left|\begin{pmatrix}
  \frac{1}{a^{\frac{3}{2}}}     & 0 & \frac{(\ell-1)^4}{a^{\frac{1}{2}}(\ell^2-3\ell+3)^2}\\
  -\frac{3(\ell^2-4\ell+5)}{a^{\frac{3}{2}}(2-\ell)\sigma} & -\frac{(2-\ell)}{a^{\frac{1}{2}}\sigma} & -\frac{3(\ell^2-4\ell+5)(\ell-1)^4}{a^{\frac{1}{2}}(\ell^2-3\ell+3)^2(2-\ell)\sigma } \\
  \frac{(\ell^4-6\ell^3+18\ell^2-30\ell+21)}{2a(2-\ell)(\ell-1)^2\sigma} &  -\frac{(5\ell-9)(2-\ell)}{2(\ell-1)\sigma}  &  -\frac{3(\ell^2-4\ell+5)(\ell-1)^4}{2(2-\ell)(\ell^2-3\ell+3)^2\sigma } \\
\end{pmatrix}
\right|\\
&= \displaystyle -\frac{2(\ell-1)^2\rho^{\frac{4}{5}}}{a^2\sigma^2} \neq 0.
\end{align*}
Therefore, system \eqref{new} is a universal unfolding of Bogdanov-Takens bifurcation of codimension 3.
\end{proof}

System \eqref{new} has the same bifurcation set with respect to $(\epsilon_1,\epsilon_2,\epsilon_3)$ as system \eqref{normal} has
with respect to $(\eta_1,\eta_2,\eta_3)$, up to a homeomorphism in the parameter space. See Fig. \ref{BTco3}.

\begin{figure}[!htp]
\begin{center}
    {\includegraphics[angle=0,width=0.7\textwidth]{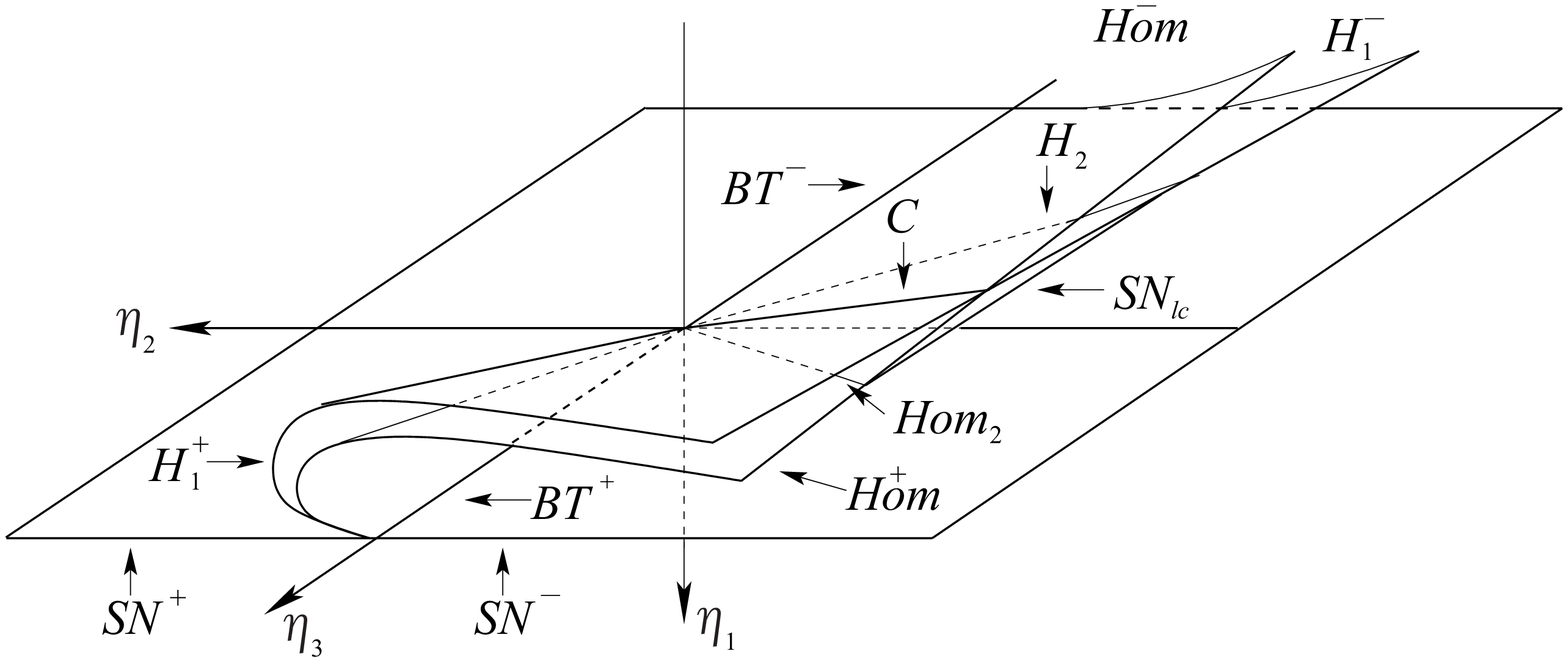}}
\caption{Bifurcation diagram of codimension 3 Bogdanov-Takens bifurcation.}
\label{BTco3}
\end{center}\end{figure}

For system \eqref{normal}, the plane $\eta_1=0$ except the origin in the parameter space is the surface of saddle-node
bifurcation. When $\eta_1>0$, system \eqref{normal} has no equilibria, and all bifurcation sets are in the half space $\eta_1<0$.
There are three bifurcation surfaces for $\eta_1<0$: a surface $H$ of Hopf bifurcation, a surface $Hom$ of
homoclinic bifurcation and a surface $SN_{lc}$ of saddle-node bifurcation of limit cycles, where surfaces $H$ and $Hom$ intersect at the curve $C$.
The surface $SN_{lc}$ is tangent to the surface $H$ at the curve $H_2$ where Hopf bifurcation of codimension 2 occurs. The surface $SN_{lc}$ is tangent to the surface $Hom$ at the curve $Hom_2$ where homoclinic bifurcation of codimension 2 occurs.
The surfaces $H$ and $Hom$ have first order tangency with the $\eta_1<0$ at $BT^+$ (i.e., positive $\eta_3$ axis) and at $BT^-$ (i.e., negative $\eta_3$-axis). System \eqref{normal} has a unique
unstable limit cycle between two surfaces $H_1^+$ and $Hom^+$ near the positive $\eta_3$-axis. There is a unique stable limit cycle between two surfaces $H_1^-$ and $Hom^-$ near the negative $\eta_3$-axis. If the parameter values are in the region bounded by three surfaces $H$, $H_{om}$ and $SN_{lc}$ near the origin,
system \eqref{normal} has exactly two hyperbolic limit cycles. The inner one is unstable and the outer one is stable.
These two limit cycles coalesce on surface $SN_{lc}$ where there exists a unique semi-stable limit cycle.

\begin{remark} When $\eta_1>0$ there is no equilibrium points for the system \eqref{third}. When $\eta_1<0$ we obtain two equilibrium points. Hence $\eta_1(\epsilon_1,\epsilon_3)=0$ or equivalently
\begin{equation*} \label{saddlenode}
    \epsilon_1=-\frac{a(\ell-1)^4}{(\ell^2-3\ell+3)^2}\epsilon_3+O(|\epsilon_3|^2)
\end{equation*}
corresponding to the saddle-node bifurcation surface in the parameter space $(\epsilon_1,\epsilon_2,\epsilon_3).$ Also by substituting \eqref{bt3points} to discriminant of \eqref{hx} we can find the exact saddle-node bifurcation surface given by
\begin{equation*}
    \epsilon_1=-\frac{a(\ell-1)^4\epsilon_3}{\sqrt{a}(\ell-1)^2(\ell^2-3\ell+3)\epsilon_3+(\ell^2-3\ell+3)^2}
\end{equation*}
which agrees with \eqref{saddlenode}. Similarly Hopf bifurcation curve for system \eqref{normal} is given by
\begin{equation*}
    \eta_2^2+\eta_1(\eta_1-\eta_3)^2=0,  \ \ \ \eta_1<0.
\end{equation*}
This is consistent with the expression of Hopf bifurcation surface given by substituting \eqref{bt3points} to $G'(\alpha)=0$.
\end{remark}

\begin{remark}\label{BT3A}
From Theorem 3.5, we know that Bogdanov-Takens bifurcation of codimension 3 occurs when $(d, A, b)=(d_m, A^*, b^*)$. By condition \eqref{aK}, $b^*$ is always negative.  However, $A^*$ could be either positive or negative depending on the values of $K$. In fact, we have 
$$A^*\in(-K, 0)\Longleftrightarrow K\in\left(\frac{1}{\sqrt{a}}, \frac{3}{2\ \sqrt{a}}\right), \ \textrm{and}\ \ A^*\in(0, K)\Longleftrightarrow K\in \left(\frac{3}{2\ \sqrt{a}}, \frac{3}{\sqrt{a}}\right).$$
Hence,  Bogdanov-Takens bifurcation of codimension 3 occurs when prey population has either weak or strong Allee effect. Furthermore, codimension 3 Bogdanov-Takens bifurcation and transcritical bifurcation between $E_{0}$ and $E_{A}$ occurs simultaneously at
\begin{equation*}
    (d,A,b,K)=\left(\frac{3}{\sqrt{a}},0,-\frac{5\ \sqrt{a}}{3},\frac{3}{2\ \sqrt{a}}\right).
\end{equation*}

\end{remark}


\section{Multiple periodic orbits}

\subsection{Existence of Hopf bifurcation}

From stability analysis in section 2, if there exists a periodic orbit, it contains $E_{\alpha}$ in its interior. The stability of $E_{\alpha}$ depends on the sign of $G'(\alpha)$.  Hence, Hopf bifurcation may occur as $G'(\alpha)$ changes its sign. From Eq. \eqref{td}  variational matrix $\mathbb{V}(\alpha,G(\alpha))$ has a pair of pure imaginary eigenvalues if $x=\alpha$ is a critical point of $G(x)$, equivalently, $x=\alpha$ is a root of
\begin{equation} \label{G'(x)}
    G'(x)=-4ax^3+3(aA+aK-b)x^2+2(aAK+bA+Kb-2)x+(A+K-AKb)=0.
\end{equation}

\begin{theorem}\label{critical}
The generic Hopf bifurcation occurs at $E_{\alpha}=(\alpha,G(\alpha))$ only if $G'(\alpha)=0$ and $G''(\alpha)\neq 0$.
\end{theorem}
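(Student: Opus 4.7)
The plan is to read off both conditions as necessary consequences of the standard requirements for an Andronov--Hopf bifurcation, using the explicit trace/determinant formulas recorded in \eqref{td}. The natural bifurcation parameter to work with is $d$, since $\alpha$ and $\beta$ arise precisely from $p(x)=d$ and their $d$-dependence is already explicit in \eqref{roots}.

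First, I would establish the necessity of $G'(\alpha)=0$. A pair of purely imaginary eigenvalues of $\mathbb{V}(\alpha,G(\alpha))$ requires $\mathrm{tr}\,\mathbb{V}=0$ and $\det\mathbb{V}>0$. From \eqref{td} the trace equals $p(\alpha)G'(\alpha)$, and $p(\alpha)>0$ because $\alpha>0$ and $ax^2+bx+1>0$ whenever $b>-2\sqrt a$; hence the trace vanishes if and only if $G'(\alpha)=0$. The determinant condition is then automatic: $E_\alpha$ in the positive cone gives $G(\alpha)>0$, and the generic case $\alpha<\beta$, i.e.\ $\alpha<1/\sqrt a$, puts $\alpha$ on the increasing branch of $p(x)$, so $p'(\alpha)>0$.

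Next, I would extract $G''(\alpha)\neq 0$ from the transversality requirement $\tfrac{d}{dd}\mathrm{Re}\,\lambda\neq 0$, equivalently $\tfrac{d}{dd}\bigl(p(\alpha)G'(\alpha)\bigr)\neq 0$. Differentiating in $d$ and substituting $G'(\alpha)=0$ at the bifurcation point collapses the expression to
\[
\frac{d}{dd}\mathrm{tr}\,\mathbb{V} \;=\; p(\alpha)\,G''(\alpha)\,\frac{d\alpha}{dd}.
\]
Implicit differentiation of $h(\alpha)=0$ from \eqref{hx}, together with $a\alpha^2+b\alpha+1=\alpha/p(\alpha)=\alpha/d$, yields
\[
\frac{d\alpha}{dd} \;=\; -\frac{\alpha/d}{h'(\alpha)},
\]
and since $\alpha$ is the smaller root of the upward parabola $h$, we have $h'(\alpha)=-\sqrt{(bd-1)^2-4ad^2}<0$ throughout the parameter range $0<d<d_m$. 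Thus $d\alpha/dd\neq 0$, and combined with $p(\alpha)>0$ this forces $G''(\alpha)\neq 0$ in order for the transversality derivative to be nonzero.

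The main obstacle is more conceptual than computational: making precise what ``generic'' is to mean. If $G''(\alpha)=0$, the trace vanishes to order at least two along the curve $d\mapsto\alpha(d)$, so the imaginary axis is not crossed transversally and one sits on a degenerate Hopf locus — precisely the situation requiring the Li\'enard-normal-form reduction and focus-quantity computation carried out in the remainder of Section~4 (and the nilpotent analysis of Section~5). Hence $G'(\alpha)=0$ and $G''(\alpha)\neq 0$ are exactly the conditions needed for a classical Andronov--Hopf bifurcation, which gives the ``only if'' statement; sufficiency — i.e.\ non-vanishing of the first Lyapunov coefficient — is a separate non-degeneracy that is then addressed explicitly in the subsequent results.
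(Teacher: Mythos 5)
Your proposal is correct and follows essentially the same route as the paper: take $d$ as the bifurcation parameter, read off $G'(\alpha)=0$ from $\mathrm{tr}\,\mathbb{V}=p(\alpha)G'(\alpha)=0$, and obtain $G''(\alpha)\neq 0$ from the transversality condition via the chain rule $\frac{\partial}{\partial\alpha}\bigl(\tfrac12 p(\alpha)G'(\alpha)\bigr)\cdot\frac{\partial\alpha}{\partial d}$, with $\frac{\partial\alpha}{\partial d}=(a\alpha^2+b\alpha+1)/\sqrt{(bd-1)^2-4ad^2}\neq 0$ exactly as in the paper. Your added checks (positivity of the determinant and the explicit implicit differentiation of $h(\alpha)=0$) are details the paper leaves implicit, but they do not change the argument.
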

\begin{proof}
We choose $d$ as a bifurcation parameter and examine conditions of Hopf bifurcation. Let $\lambda(d)$ and $\bar{\lambda}(d)$ be the two conjugated complex eigenvalues of $\mathbb{V}(\alpha, G(\alpha))$. If $G'(\alpha)=0,$ then $\operatorname{\mathbb{R}e(\lambda(d))}=\operatorname{\mathbb{R}e(\bar{\lambda}(d))}=0$. By Eq. \eqref{roots} and Eq. \eqref{td} we have
\begin{equation*}
   \frac{\partial (\operatorname{\mathbb{R}e}(\lambda(d)))}{\partial d}=\frac{\partial}{\partial \alpha}\left(\frac{1}{2}p(\alpha)G'(\alpha)\right)\cdot\frac{\partial\alpha}{\partial d} =\frac{p(\alpha)G''(\alpha)(a\alpha^2+b\alpha+1)}{2\sqrt{(bd-1)^2-4ad^2}}.
\end{equation*}
The inequality above holds as long as $G''(\alpha)\neq 0.$
\end{proof}

\subsection{Hopf Bifurcation of codimension 3}

There are several methods developed to study degenerate Hopf bifurcations, for instance, the method of Poincar\'e normal form \cite{GH,RS}, the method of averaging \cite{CH}, method of successive function\cite{ALGM}, the method of Lyapunov-Schmidt \cite{GL}, etc. These methods usually involve a fair amount of calculations, and direct application of these methods may lead to complicated focus quantities whose signs are difficult to determine. However, due to the special structure of predator-prey system, we are able to convert it to a generalized Li\'enard system
\begin{equation}  \label{Lienard}
\left\{\begin{split}
\displaystyle \frac{dx}{dt}&=\varphi(y)-F(x) ,\\
\displaystyle \frac{dy}{dt}&= -g(x),
\end{split}\right.
\end{equation}
where $xg(x)>0$ for $x\neq 0.$  Note that two variables $x$ and $y$ are separated and no cross terms appear on the right side of system \eqref{Lienard}. This feature is helpful to determine the order of a weak focus.

Now we introduce an important result about weak focus of the Li\'enard system \cite{han}.
\begin{lemma} \label{theorem62}
Suppose that $\varphi(y),\ F(x)$ and $g(x)$ are $C^{\infty}$ -functions in a neighborhood of the origin and
\begin{equation}\label{condition_Lienard}
    \varphi(0)=g(0)=F(0)=F'(0)=0,\ \varphi '(0)>0\ \ \textrm{and} \ \ g'(0)>0.
\end{equation}
Let $H(x)=\displaystyle \int_0^x g(s)ds.$ If there exists a $C^{\infty}$ function $\theta(x)=-x+O(x^2)$ such that $H(\theta(x))\equiv H(x)$ and
 \begin{equation*}
     F(\theta(x))-F(x)=\sum_{i\geq 1} B_ix^i,
 \end{equation*}
then the origin of system \eqref{Lienard} is a focus of order $k$ if $B_i=0$ for $i=1,2, ..., 2k$ and $B_{2k+1}\neq 0$. Furthermore, it is locally stable (resp., unstable) if $B_{2k+1}< 0$ (resp., $B_{2k+1}>0$).
\end{lemma}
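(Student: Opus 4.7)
The strategy is to exploit the involution $\theta$ as an exact symmetry of the level curves of the Hamiltonian $\mathcal{H}(x,y)= H(x)+\Phi(y)$, where $\Phi(y)=\int_0^y\varphi(s)\,ds$, of the unperturbed system obtained from \eqref{Lienard} by deleting $F$. Under the hypotheses \eqref{condition_Lienard}, this unperturbed system has a nondegenerate center at the origin with closed orbits $\Gamma_h=\{\mathcal{H}=h\}$ for small $h>0$, and the focus quantities of the full system \eqref{Lienard} can be read off from the energy change along the perturbed flow. A direct computation gives $\dot{\mathcal{H}}=-g(x)F(x)$, so that for a trajectory starting at $(x_0,0)$ with $x_0>0$ small and hitting the $x$-axis again at $(x_1,0)$ after one revolution,
\begin{equation*}
H(x_1)-H(x_0)= -\int_0^{T_1}\! g(x(t))F(x(t))\,dt.
\end{equation*}
Parameterizing the upper and lower halves of $\Gamma_h$ by $x$ via $dt = dx/(\varphi(y)-F(x))$ and expanding to leading order in $F$ converts this into an integral of $g(x)F(x)$ against an explicit $F$-independent weight on $[x_-(h),x_+(h)]$.

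The crucial step is to apply $x\mapsto\theta(x)$ as a change of variable in the integral. Differentiating $H(\theta(x))\equiv H(x)$ yields $g(\theta(x))\theta'(x)=g(x)$, so the measure $g(x)\,dx$ is preserved; moreover $\theta$ exchanges the intercepts $x_\pm(h)$ and leaves the implicit functions $y_\pm(x,h)$ on the two halves invariant, since those depend only on $\mathcal{H}$. Averaging the original integral with its $\theta$-pullback symmetrizes the integrand and produces
\begin{equation*}
H(x_1)-H(x_0)\;=\;-\tfrac12\int_{x_-(h)}^{x_+(h)} g(x)\bigl(F(x)-F(\theta(x))\bigr)\,\omega(x,h)\,dx,
\end{equation*}
where $\omega(x,h)$ is a $C^\infty$, $\theta$-symmetric weight depending only on $\mathcal{H}$. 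By hypothesis $F(x)-F(\theta(x))=-\sum_{i\geq 1}B_i x^i$, so the displacement is now entirely controlled by the $B_i$.

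Finally, expand the Poincar\'e return map $P$ by writing $d(x_0)=P(x_0)-x_0=\sum_j v_j x_0^j$. Since $\theta(x)=-x+O(x^2)$ and $\omega$ is $\theta$-symmetric, each even moment $\int g(x)\,x^{2j}\omega\,dx$ is higher order in $x_0$, whereas each odd moment contributes a nonzero multiple of $x_0^{2j+1}$. Consequently $B_1=\cdots=B_{2k}=0$ forces $v_1=\cdots=v_{2k}=0$, while $B_{2k+1}\neq 0$ yields $v_{2k+1}\neq 0$ with the same sign as $B_{2k+1}$; the classical Poincar\'e criterion then identifies the origin as a weak focus of order exactly $k$, stable when $B_{2k+1}<0$ and unstable when $B_{2k+1}>0$. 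The main technical obstacle is the moment-coefficient bookkeeping: one must verify that the first non-vanishing $B_{2k+1}$ contributes a nonzero leading coefficient to the correctly indexed focus quantity $v_{2k+1}$ and that the sign survives both the $\theta$-symmetrization and the passage from the half-return to the full-return map. Smoothness of $\theta$ at $x=0$ with $\theta'(0)=-1$ is what makes the formal power-series manipulations rigorous to all orders.
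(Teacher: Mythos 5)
First, a point of comparison: the paper does not prove Lemma \ref{theorem62} at all — it is imported as a known result from Han's work on Liapunov constants of Li\'enard systems (\cite{han}), so there is no in-paper argument to measure your proof against. Your proposal does identify the correct central mechanism: the involution $\theta$ with $H(\theta(x))\equiv H(x)$, the identity $g(\theta(x))\theta'(x)=g(x)$ obtained by differentiating it, and the reduction of the stability question to the coefficients of $F(\theta(x))-F(x)$. This is indeed the idea underlying Han's lemma.

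As written, however, the argument has genuine gaps, the most serious of which you flag yourself and then do not close. (i) The identity $H(x_1)-H(x_0)=-\int_0^{T_1}g(x(t))F(x(t))\,dt$ is exact only along the \emph{perturbed} orbit; replacing that orbit by the level curve $\Gamma_h$ and ``expanding to leading order in $F$'' discards corrections that are quadratic and higher in $F$. Those corrections involve the Taylor coefficients of $F$ itself, not merely the combinations $B_i$, so it is not automatic that $B_1=\cdots=B_{2k}=0$ forces the displacement to vanish through order $x_0^{2k}$ — which is exactly what ``focus of order $k$'' requires. The standard repair is to split $F$ into its $\theta$-symmetric and $\theta$-antisymmetric parts, prove that the symmetric part alone produces a center (e.g., pass to $u=\operatorname{sgn}(x)\sqrt{2H(x)}$, in which $\theta$ becomes $u\mapsto -u$ and the symmetrized system is time-reversible), and only then conclude that the focal quantities are controlled by $F(\theta(x))-F(x)$; your proposal has no counterpart to this step. (ii) The weight $\omega(x,h)$ is not $C^\infty$: it behaves like $(\varphi(y_\pm(x,h)))^{-1}$ and blows up like an inverse square root at the turning points $x_\pm(h)$, precisely where the approximation $\dot x\approx\varphi(y)$ also breaks down because $\varphi(y)\to 0$ while $F(x)\neq 0$ there; the parameterization by $x$ must be justified separately near those points. (iii) The decisive ``moment-coefficient bookkeeping'' — that $B_{2k+1}\neq 0$ yields a nonzero $v_{2k+1}$ of the same sign once (i) and (ii) are resolved — is asserted rather than carried out. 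In short, the proposal is a correct heuristic for why the lemma holds, but not yet a proof; either the symmetrization-plus-reversibility argument must be supplied or the result cited from \cite{han}, as the paper does.
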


By the Li\'enard transform, we change system \eqref{third} to Li\'enard system \eqref{Lienard} with
$$\varphi(y)=G(\alpha)(e^y-1), \ \ F(x)=G(-x+\alpha)-G(\alpha), \ \ g(x)=\frac{d}{p(-x+\alpha)}-1.$$
It is easy to verify $\varphi(y),\ F(x)$ and $g(x)$ satisfy condition \eqref{condition_Lienard}. As defined in Lemma \ref{theorem62}, we find 
\begin{equation*}
    H(x)=\int_0^x g(s)ds=-d\left(\frac{ax^2}{2}+\frac{x}{\alpha} +\ln\left(\frac{\alpha-x}{\alpha}\right)\right),
\end{equation*}
 and
 \begin{equation*}
     \theta(x)=-x+\mu_2x^2+\mu_3x^3+\mu_4x^4+\mu_5x^5+\mu_6x^6+O(x^7),
 \end{equation*}
where
\begin{equation}\label{Lienard_coeff}
\left.\begin{split}
    \mu_2 &=-\frac{2}{3\alpha(1-a\alpha^2)},  \\
    \mu_3 &=-\mu_2^2,\\
    \mu_4 &=\mu_2\left(2\mu_2^2+\frac{3\mu_2}{2\alpha}+\frac{3}{5\alpha^2}\right), \\
    \mu_5 &=-\mu_2^2\left(4\mu_2^2+\frac{9\mu_2}{2\alpha}+\frac{9}{5\alpha^2}\right),  \\
    \mu_6 &=\mu_2\left(9\mu_2^4+\frac{57\mu_2^3}{4\alpha}+ \frac{177\mu_2^2}{20\alpha^2}+\frac{12\mu_2}{5\alpha^3}+\frac{3}{7\alpha^4}\right).
\end{split}\right.
\end{equation}
such that $H(\theta(x))\equiv H(x)$. Here, we have $\mu_2<0$ and $\mu_3<0$.

\begin{theorem}\label{cod_Hopf}
The codimension of Hopf bifurcation is summarized as below.
\begin{table}[H]
\begin{center}
\begin{tabular}{c|c}
\hline Codimension   & \ \ \ \ \ \ \ \ \ \ \ \ \ \ \ \   Conditions \ \ \ \ \ \ \ \ \ \ \ \ \ \ \ \ \ \  \\
\hline
  $0$     & $G'(\alpha)\neq 0$ \\
  $1$     & $G'(\alpha)=0, G''(\alpha)\neq\dfrac{G'''(\alpha)}{3\mu_2}$  \\
  $2$     & $G'(\alpha)=0, G''(\alpha)=\dfrac{G'''(\alpha)}{3\mu_2}, G'''(\alpha)\neq\dfrac{40a\mu_2\alpha^2}{5\mu_2\alpha+2}$ \\
  $3$     & $G'(\alpha)=0, G''(\alpha)=\dfrac{G'''(\alpha)}{3\mu_2}, G'''(\alpha)=\dfrac{40a\mu_2\alpha^2}{5\mu_2\alpha+2}$ \\
\hline
\end{tabular}
\end{center}
\end{table}
\end{theorem}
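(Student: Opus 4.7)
The plan is to apply Lemma \ref{theorem62} directly: the codimension of the Hopf bifurcation at $E_\alpha$ is exactly $k$ precisely when the first nonvanishing coefficient of the series $F(\theta(x))-F(x)=\sum_{i\ge 1}B_i x^i$ is $B_{2k+1}$ (the ``codim $0$'' row of the table simply recording that $E_\alpha$ is not on the Hopf surface at all). I will therefore compute $B_1$, $B_3$, $B_5$ in closed form, and for the codim-$3$ line verify that $B_7\ne 0$ under the three imposed equalities. Along the way I will note that the even coefficients $B_2,B_4,B_6$ vanish automatically once the preceding odd-order ones do, the standard Lyapunov symmetry for Li\'enard systems.

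The computational engine is the Taylor expansion
\begin{equation*}
F(u) = -G'(\alpha) u + \tfrac{1}{2}G''(\alpha)u^2 - \tfrac{1}{6}G'''(\alpha)u^3 + \tfrac{1}{24}G^{(4)}(\alpha)u^4 + \cdots,
\end{equation*}
into which I substitute $u=\theta(x)$ and $u=x$ and then subtract. The inputs I need are the coefficients $\mu_2,\ldots,\mu_6$ already tabulated in \eqref{Lienard_coeff} and the derivatives $G^{(j)}(\alpha)$. Crucially, $G(x)=(x-A)(K-x)(ax^2+bx+1)$ is a polynomial of degree four, so $G^{(4)}(\alpha)=-24a$ and $G^{(j)}(\alpha)=0$ for all $j\ge 5$; every sum truncates after finitely many terms.

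Extracting coefficients order by order: since $\theta(x)-x=-2x+O(x^2)$ whereas $\theta(x)^k-x^k=O(x^k)$ for $k\ge 2$, only the $-G'(\alpha)[\theta(x)-x]$ term contributes to the $x^1$ coefficient, giving $B_1=2G'(\alpha)$ and handling the first row of the table. Assuming $G'(\alpha)=0$ and reading off the $x^3$ coefficient from the contributions of $[\theta(x)^2-x^2]$ and $[\theta(x)^3-x^3]$ yields
\begin{equation*}
B_3 = \tfrac{1}{3}\bigl[G'''(\alpha)-3\mu_2 G''(\alpha)\bigr],
\end{equation*}
whose vanishing is equivalent to $G''(\alpha)=G'''(\alpha)/(3\mu_2)$. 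A parallel tally at order $x^4$, using the explicit relation $\mu_3=-\mu_2^2$, gives $B_4=-\tfrac{3\mu_2}{2}B_3$, so $B_3=0\Rightarrow B_4=0$ for free.

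For the third row I repeat the extraction at order $x^5$: the relevant $x^5$ coefficients of $\theta(x)^2-x^2$, $\theta(x)^3-x^3$, $\theta(x)^4-x^4$ are determined by $\mu_2,\mu_3,\mu_4$, while $[\theta(x)^5-x^5]$ drops out because $G^{(5)}(\alpha)=0$. Substituting $G^{(4)}(\alpha)=-24a$ and using $G''(\alpha)=G'''(\alpha)/(3\mu_2)$ from the previous step should collapse $B_5$ into a linear equation in $G'''(\alpha)$ whose leading coefficient is proportional to $5\mu_2\alpha+2$, producing the stated condition $G'''(\alpha)=40a\mu_2\alpha^2/(5\mu_2\alpha+2)$. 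The codim-$3$ row then demands checking $B_7\ne 0$ under all three equalities; this repeats the same scheme with $\mu_5,\mu_6$ incorporated and is where the main obstacle lies --- it is purely computational but heavy, since the order-by-order bookkeeping at $x^7$ involves several multi-index sums and one must substitute the lower-order conditions in the right sequence so that the intermediate expressions cancel rather than explode. The strict signs $a,\alpha>0$ and $\mu_2<0$ (coming from \eqref{Lienard_coeff}) should then suffice to rule out accidental vanishing of the resulting rational expression.
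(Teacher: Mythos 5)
Your computational scheme reproduces the paper's own calculation: the paper likewise converts to the Li\'enard form, invokes Lemma \ref{theorem62}, and obtains $B_1=2G'(\alpha)$, $B_3=\tfrac13(G'''(\alpha)-3\mu_2G''(\alpha))$, $B_5=-\tfrac{1}{10\alpha^2}((5\mu_2\alpha+2)G'''(\alpha)-40a\mu_2\alpha^2)$, together with the even coefficients $B_2,B_4,B_6$ vanishing once the preceding odd ones do, and finally $B_7=-\tfrac{96a(6a^2\alpha^4+23a\alpha^2+6)}{630\alpha^3(1-a\alpha^2)^2(2+3a\alpha^2)}<0$. Your order-by-order bookkeeping, the use of $G^{(4)}(\alpha)=-24a$ and $G^{(j)}(\alpha)=0$ for $j\ge5$, and the sign argument for $B_7$ are all consistent with what the paper does, so the table itself would follow from your computation.

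There is, however, a genuine gap: you identify ``codimension $k$'' with ``the first nonvanishing coefficient is $B_{2k+1}$,'' but Lemma \ref{theorem62} only gives the \emph{order of the weak focus}. To conclude that the Hopf bifurcation has codimension exactly $3$ you must additionally show (i) that the three conditions $B_1=B_3=B_5=0$ are simultaneously realizable inside the feasible parameter region ($a>0$, $b>-2\sqrt a$, $0<\alpha<1/\sqrt a$, etc.) --- otherwise the last row of the table is vacuous --- and (ii) that $B_1,B_3,B_5$ are independent functions of the parameters, i.e.\ a transversality condition, so that the three degeneracies can be unfolded. The paper devotes the second half of its proof to exactly this: after reparametrizing from $(K,d,b)$ to $(K,\alpha,b)$, it solves $B_5=0$ for $b=b(K,\alpha)$, applies the Poincar\'e--Miranda theorem on an explicit numerical rectangle to produce $(K^*,\alpha^*)$ with $C_1=C_2=0$, checks $b(K^*,\alpha^*)>-2\sqrt a$, and verifies the Jacobian condition $\frac{\partial(B_1,B_3,B_5)}{\partial(K,\alpha,b)}\neq 0$ of \eqref{Jacobian}. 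None of this appears in your proposal, and without it your argument establishes only that the focus order is at most $3$ and equals $3$ \emph{if} the stated equalities can be met, not that a codimension-$3$ Hopf bifurcation actually occurs in this family.
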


\begin{proof}
The Taylor's series of $F(\theta(x))-F(x)$ about $x=0$ is
\begin{equation}\label{Fx}
\left.\begin{split}
    F(\theta(x))-F(x) &=G(-\theta(x)+\alpha)-G(-x+\alpha)=\displaystyle\sum_{i=1}^{7}B_ix^i+O(x^8),
\end{split}\right.
\end{equation}
where
\begin{align*}
    B_1=2G'(\alpha),\ \  \textrm{and}\ \  B_2=-\mu_2G'(\alpha).
\end{align*}
If $G'(\alpha)\neq 0$, $E_\alpha$ is a hyperbolic focus. If $G'(\alpha)=0$, we analyze the coefficients $B_i(i\geq3)$ to find the codimension of Hopf bifurcation. Indeed if $B_1=0$, we have
\begin{equation}\label{B3}
\left.\begin{split}
    B_3 &=\frac{1}{3}(G'''(\alpha)-3\mu_2G''(\alpha)) , \\
    B_4 &=-\frac{\mu_2}{2}\left(G'''(\alpha)-3\mu_2G''(\alpha)  \right).
\end{split}\right.
\end{equation}
If $G'''(\alpha)-3\mu_2G''(\alpha)\neq 0$, Hopf bifurcation is of codimension 1.  If $G'''(\alpha)-3\mu_2G''(\alpha)=0$, then both $B_3$ and $B_4$ are equal to zero. This corresponds to degenerate Hopf bifurcation, and system \eqref{third} has at least two limit cycles in the positive cone. Now we continue to analyze the higher order coefficients in Eq. \eqref{Fx}. By assuming $B_1=0$ and $B_3=0$ we find
\begin{equation}
\left.\begin{split}
    B_5 &=-\frac{1}{10\alpha^2}((5\mu_2\alpha+2)G'''(\alpha)-40a\mu_2\alpha^2),  \\
    B_6 &=\frac{\mu_2}{4\alpha^2}((5\mu_2\alpha+2)G'''(\alpha)-40a\mu_2\alpha^2).
\end{split}\right.
\end{equation}
It is clear that both $B_5$ and $B_6$ equal to zero if $(5\mu_2\alpha+2)G'''(\alpha)-40a\mu_2\alpha^2=0$. If we can find a set of parameters in the feasible region such that $B_5=0$,  $E_\alpha$ is a focus with order at least 3 which implies the codimension of Hopf bifurcation at $E_\alpha$ is at least 3. Now we assume $B_1=0, \ B_3=0$ and $B_5=0$ and compute
\begin{equation*}
    B_7=-\frac{96a(6a^2\alpha^4+23a\alpha^2+6)}{630\alpha^3(1-a\alpha^2)^2(2+3a\alpha^2)}<0.
\end{equation*}
Hence, the codimension of Hopf bifurcation is at most 3.

Now we will prove that degenerate Hopf bifurcation of codimension 3 exists by showing that there exists a set of parameters in the feasible region such that $B_1, B_3$ and $B_5$ vanish simultaneously and $B_7<0$. Inspired of expression of $B_i$'s, we introduce a diffeomorphism
\begin{equation*}
\left.\begin{split}
    \psi: \mathbb{R}^+\times (0, d_m)\times (-2\sqrt{a}, \infty)& \longrightarrow  \mathbb{R}^+\times (0, \frac{1}{\sqrt{a}})\times (-2\sqrt{a}, \infty), \\
         (K, d, b)&\longrightarrow  (K, \alpha, b),
\end{split}\right.
\end{equation*}
where $\alpha=\alpha(d, b)$ is defined in Eq. \eqref{hx}. Then we may consider $(K, \alpha, b)$ as new independent parameters instead of $(K, d, b)$.

Let $A=0$. Assuming $B_5=0$, we can solve for $b$ and obtain
\begin{equation*} \label{b}
    b(K,\alpha)=\frac{a(aK\alpha^2-36a\alpha^3+6K-44\alpha)}{3(a\alpha^2+2)}.
\end{equation*}
Substituting $b=b(K,\alpha)$ in expressions of $B_1$ and $B_3$ we obtain
\begin{equation}
\left.\begin{split}
    C_1(K,\alpha) &= \frac{(18K^2\alpha^3-72K\alpha^4+72\alpha^5)a^2+(12K^2\alpha-79K\alpha^2+90\alpha^3)a+6K-12\alpha}{9a\alpha^2+6}, \\
    C_2(K,\alpha) &= \frac{(18K^2\alpha^2-72K\alpha^3+48\alpha^4)a^2+(12K^2-88K\alpha+234\alpha^2)a-12}{9a\alpha^2+6}. \nonumber
\end{split}\right.
\end{equation}
Here we fix $a=0.00025573.$ Note that when $24.53545865<\alpha<24.53545867$, we have
\begin{align*}
    C_2(K,\alpha) &>0 \ \ \textrm{when} \ \ K=71.75583310, \\
    C_2(K,\alpha) &<0 \ \ \textrm{when} \ \ K=71.75583315.
\end{align*}
Similarly when $71.75583310<K<71.75583315$, we have
\begin{align*}
    C_1(K,\alpha) &>0 \ \ \textrm{when} \ \ \alpha=24.53545865, \\
     C_1(K,\alpha) &<0 \ \ \textrm{when} \ \ \alpha=24.53545867.
\end{align*}

\begin{figure}[H]
\begin{center}
    {\includegraphics[angle=0,width=0.45\textwidth]{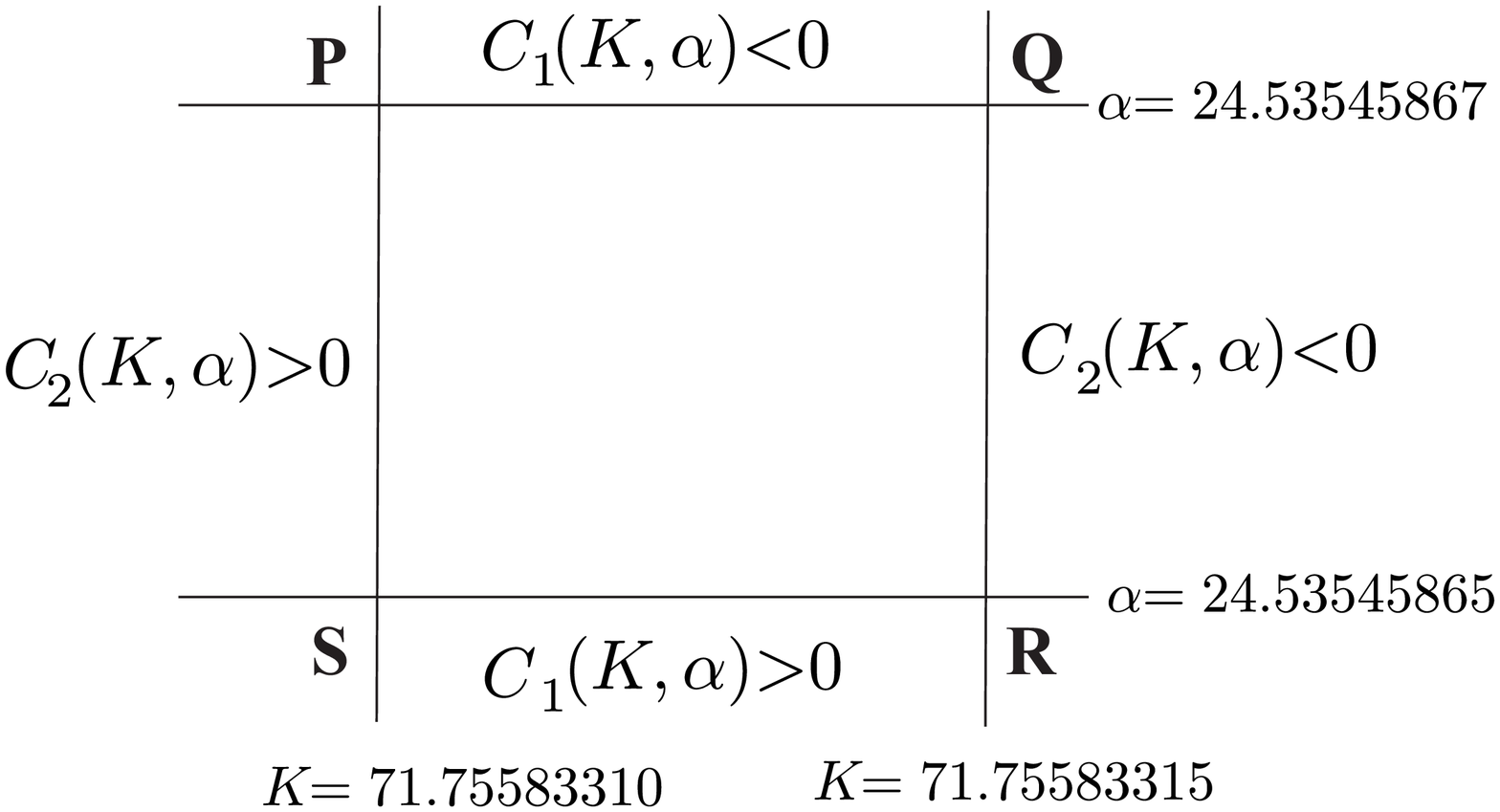}} \ \ \ \
    {\includegraphics[angle=0,width=0.42\textwidth]{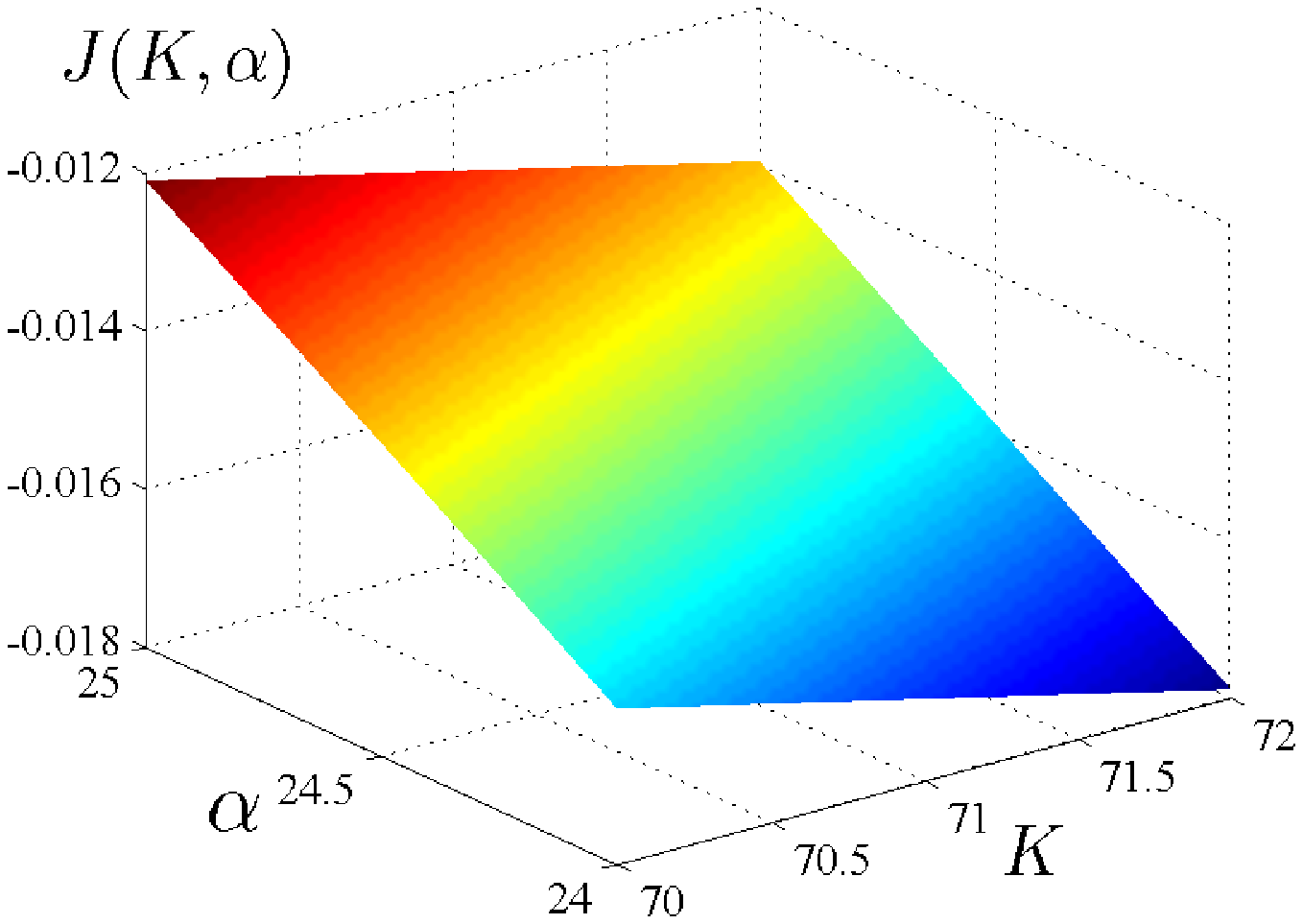}}
\caption{Rectangle {\bf PQRS} (left panel) and graph of $J=J(K, \alpha)$ (right panel).}
\label{NH3}
\end{center}\end{figure}

By Poincar\'e-Miranda theorem ~\cite{hopf} there exists $(K^*,\alpha^*)$ inside the rectangle \textbf{PQRS} such that $C_1(K^*,\alpha^*)=0$ and $C_2(K^*,\alpha^*)=0.$ It is easy to check that for any $(K,\alpha)$ inside the rectangle we have $b(K,\alpha)>-2\sqrt{a}=-0.03198312054$. Therefore, $b(K^*,\alpha^*)>-2\sqrt{a}$. Also note that for any $(K,\alpha)$ inside the rectangle \textbf{PQRS} we have
\begin{equation}\label{Jacobian}
    \displaystyle J(K,\alpha)= \frac{\partial(B_1, B_3, B_5)}{\partial(K, \alpha, b)}_{\big|b=b(K,\alpha)}<0,
\end{equation}
which can be seen from Fig. \ref{NH3}. Hence the non-degeneracy condition of Hopf bifurcation is fulfilled. Thus the codimension of the Hopf bifurcation is exactly 3.
\end{proof}

\begin{remark}
By inequality \eqref{Jacobian} and implicit function theorem, there exist smooth functions
$$K=\widetilde{K}(A),\ \ d=\tilde{d}(A),\ \  b=\tilde{b}(A)$$
in a small neighborhood of $A=0$ such that $B_i(\widetilde{K}(A), \tilde{d}(A), \tilde{b}(A), A)=0, i=1, 3, 5$.
Therefore,  Hopf bifurcation of codimension 3 can occur for both weak and strong Allee effect.
\end{remark}

From Theorem \ref{critical} Hopf bifurcation occurs at a critical point of $G(x)$. The following corollary describes the relation between the nature of critical point and the codimension of Hopf bifurcations.
\begin{corollary}
If $E_\alpha$ is an inflection point or a local minimum of $G(x)$, then the codimension of Hopf bifurcation is at most two. The codimension three Hopf bifurcation is possible only if $E_\alpha$ is a local maximum of $G(x)$.
\end{corollary}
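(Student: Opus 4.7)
The plan is to prove the contrapositive by reading off the sign of $G''(\alpha)$ that is forced by the codimension-3 conditions in Theorem \ref{cod_Hopf}, namely
\begin{equation*}
G'(\alpha)=0,\qquad G''(\alpha)=\frac{G'''(\alpha)}{3\mu_2},\qquad G'''(\alpha)=\frac{40a\mu_2\alpha^2}{5\mu_2\alpha+2}.
\end{equation*}
I want to show that these conditions force $G''(\alpha)<0$, so that $\alpha$ is necessarily a strict local maximum of $G$. This immediately yields the corollary: if $\alpha$ is a local minimum (so $G''(\alpha)>0$) or an inflection point (so $G''(\alpha)=0$), then the third-order condition cannot be met and the codimension is therefore at most two.

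First I would recall from Eq. \eqref{Lienard_coeff} that
\begin{equation*}
\mu_2=-\frac{2}{3\alpha(1-a\alpha^2)}.
\end{equation*}
Since $0<\alpha<1/\sqrt{a}$ in the regime where $E_\alpha$ lies in the positive cone, the denominator $3\alpha(1-a\alpha^2)$ is positive, and hence $\mu_2<0$. Next I would simplify the right-hand side of the third condition by direct substitution:
\begin{equation*}
5\mu_2\alpha+2=2-\frac{10}{3(1-a\alpha^2)}=-\frac{4+6a\alpha^2}{3(1-a\alpha^2)}<0,
\end{equation*}
so that
\begin{equation*}
\frac{40a\mu_2\alpha^2}{5\mu_2\alpha+2}=\frac{40a\alpha}{2+3a\alpha^2}>0.
\end{equation*}
Thus the codimension-3 condition forces $G'''(\alpha)>0$.

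Plugging this into the codimension-2 condition $G''(\alpha)=G'''(\alpha)/(3\mu_2)$ and using $\mu_2<0$ with $G'''(\alpha)>0$, I conclude $G''(\alpha)<0$. Combined with $G'(\alpha)=0$, this says $E_\alpha$ is a strict local maximum of $G(x)$. Contrapositively, at an inflection point (where $G''(\alpha)=0$) or a local minimum (where $G''(\alpha)>0$), the equality $G''(\alpha)=G'''(\alpha)/(3\mu_2)$ would force $G'''(\alpha)\le 0$, contradicting the positivity just derived. Therefore the codimension cannot exceed two in those cases, which proves the corollary.

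The computation is essentially routine, and no real obstacle is expected: the only delicate point is verifying the sign of the denominator $5\mu_2\alpha+2$, which needs the bound $\alpha<1/\sqrt{a}$ coming from $\alpha$ being a valid root of $h(x)$ in Eq. \eqref{hx}. Once that sign is pinned down, the chain of implications is immediate from the defining conditions of codimension $2$ and $3$.
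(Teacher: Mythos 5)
Your proposal is correct and follows essentially the same route as the paper's own (very terse) proof: the paper likewise observes that $B_5=0$ forces $G'''(\alpha)>0$ and hence $B_3=0$ forces $G''(\alpha)<0$, then splits on the sign of $G''(\alpha)$. Your version simply makes explicit the sign computations ($\mu_2<0$ and $5\mu_2\alpha+2<0$, using $0<\alpha<1/\sqrt{a}$) that the paper leaves to the reader, and these check out.
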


\begin{proof}
Note that if $B_5=0$, then $G'''(\alpha)>0$, and hence $G''(\alpha)$ should be less than zero for $B_3=0$. Thus the desired result follows by considering three cases $G''(\alpha)=0$, $G''(\alpha)>0$ and $G''(\alpha)<0$.
\end{proof}

From Theorem \ref{cod_Hopf}, there exists  $(K, d, b)=(\hat{K}, \hat{d}, \hat{b})$ in the feasible region such that $B_i=0 (i=1, \cdots, 6)$, and system \eqref{third} has the same bifurcation set with respect to $(K, d, b)$ near $(\hat{K}, \hat{d}, \hat{b})$ as system
\begin{equation}  \label{Hopfnormal}
\left\{\begin{split}
\displaystyle \dot{r}&=\eta_1r+\eta_2r^3+\eta_3r^5-r^7+O(r^9),\\
\displaystyle \dot{\theta}&=-1+O(r^2),
\end{split}\right.
\end{equation}
has with respect to $(\eta_1,\eta_2,\eta_3)$, up to a homeomorphism in the parameter space. See Fig. \ref{Hopfcod3}.

\begin{figure}[!htp]
\begin{center}
    \includegraphics[angle=0,width=0.82\textwidth]{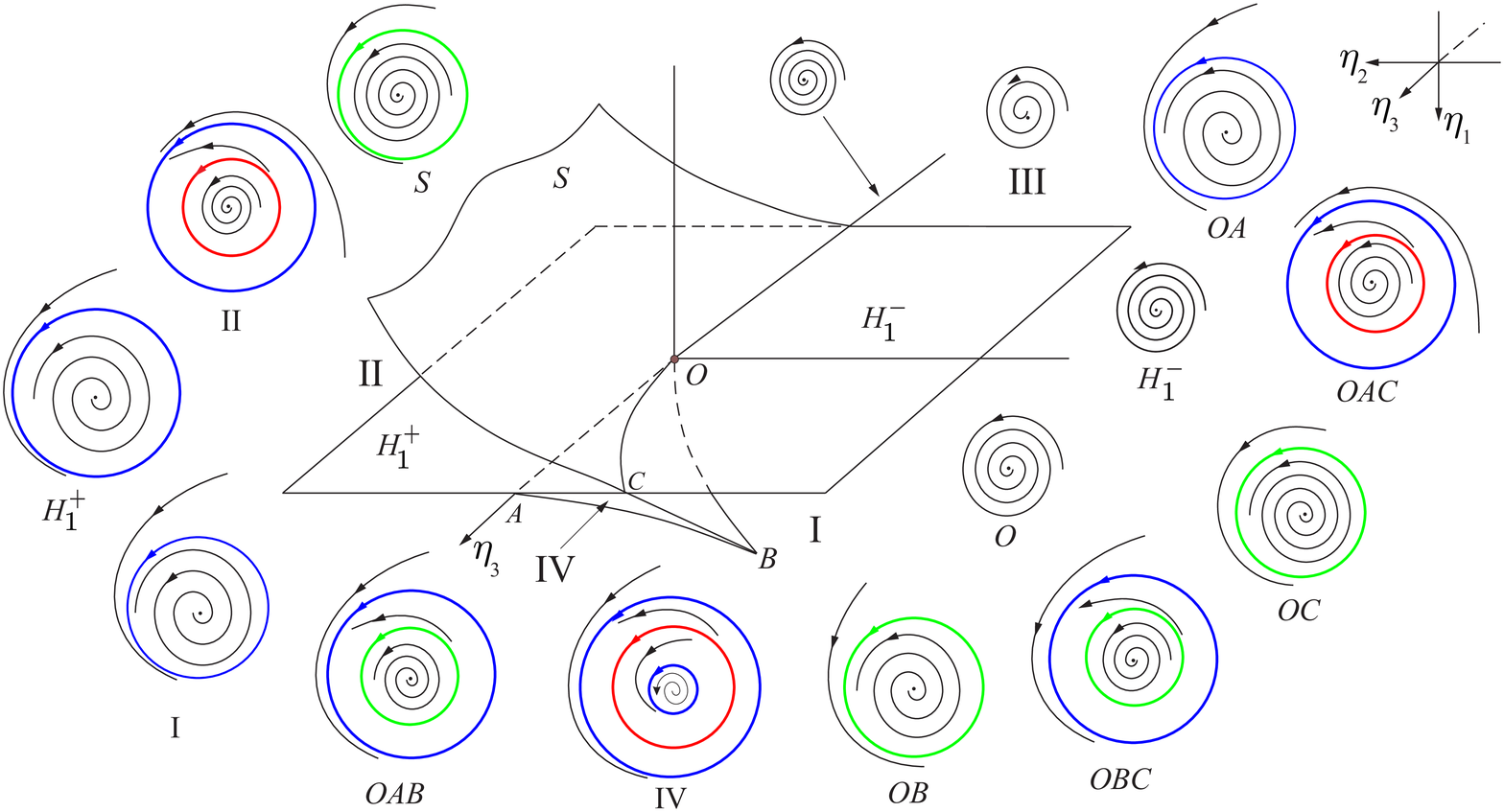}
\caption{Hopf bifurcation diagram of codimension 3.}
\label{Hopfcod3}
\end{center}\end{figure}

For system \eqref{Hopfnormal}, the supercritical Hopf bifurcation occurs on the half plane $H_1^-=\{(\eta_1, \eta_2, \eta_3)|\eta_1=0, \eta_2<0\}$, and subcritical Hopf bifurcation occurs on the half plane $H_1^+=\{(\eta_1, \eta_2, \eta_3)|\eta_1=0, \eta_2>0\}$. The Hopf bifurcation of co-dimension 2 occurs on the $\eta_3$-axis except the origin. The saddle-node bifurcation of limit cycles occurs on the surface $SN_{lc}$, which consists of three parts, the surface $S$ above the $\eta_2\eta_3$ plane, and surfaces $OAB$ and $OBC$ below the $\eta_2\eta_3$ plane. The $\eta_2\eta_3$ plane and the surface $SN_{lc}$ subdivide the parameter space into four generic regions, I, II, III and IV. The phase portraits of system \eqref{Hopfnormal} in generic regions and bifurcation sets are sketched in Fig. \ref{Hopfcod3}, where blue red and green circles denote stable, unstable and semistable limit cycles, respectively.

\noindent {\bf Examples.} We provide two examples in which system \eqref{third} has two and three limit cycles, respectively.

\begin{figure}[!htp]
\begin{center}
\subfigure[{Two limit cycles}]
    {\includegraphics[angle=0,width=0.48\textwidth]{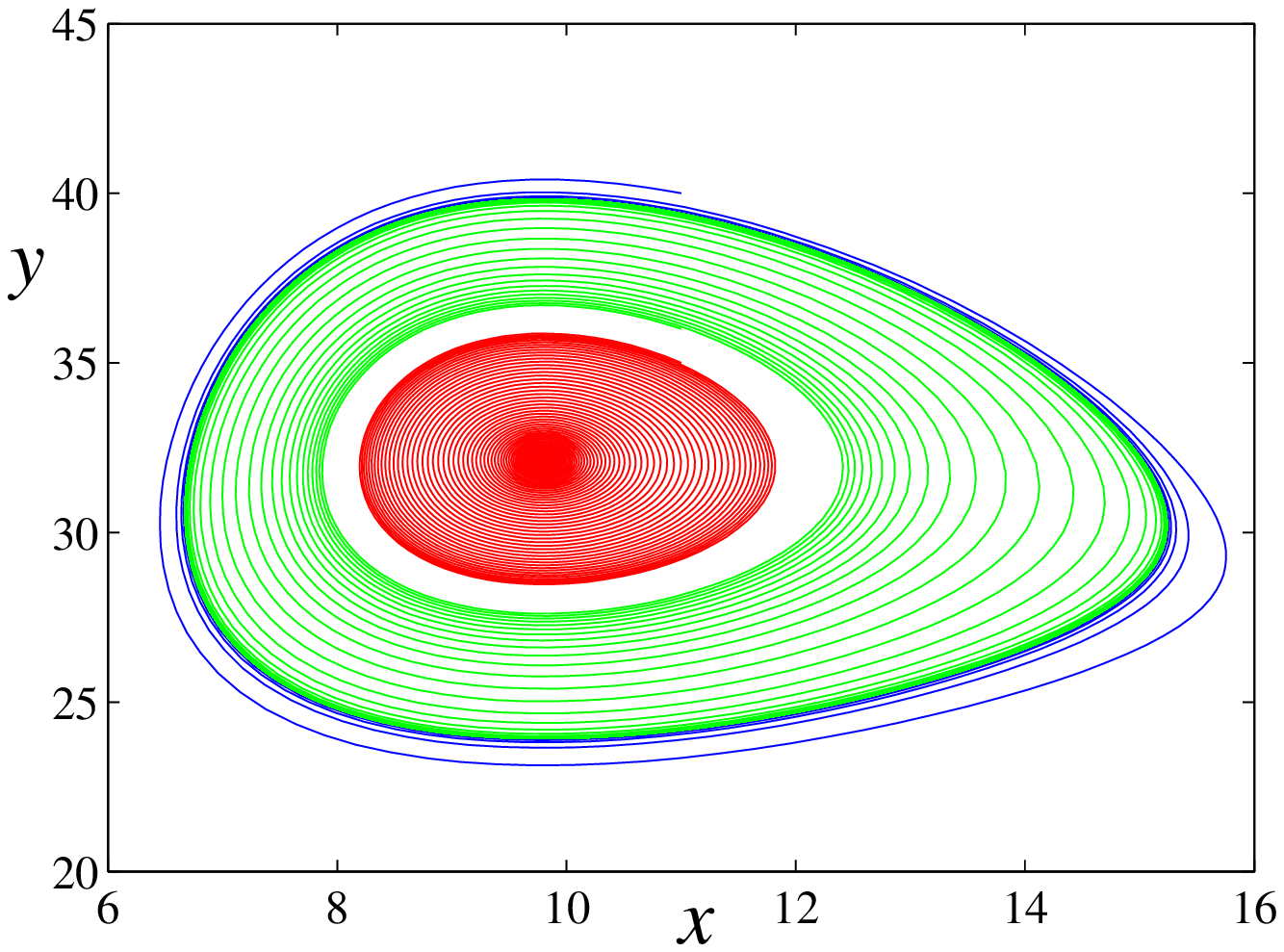}}\
\subfigure[{Three limit cycles}]
    {\includegraphics[angle=0,width=0.48\textwidth]{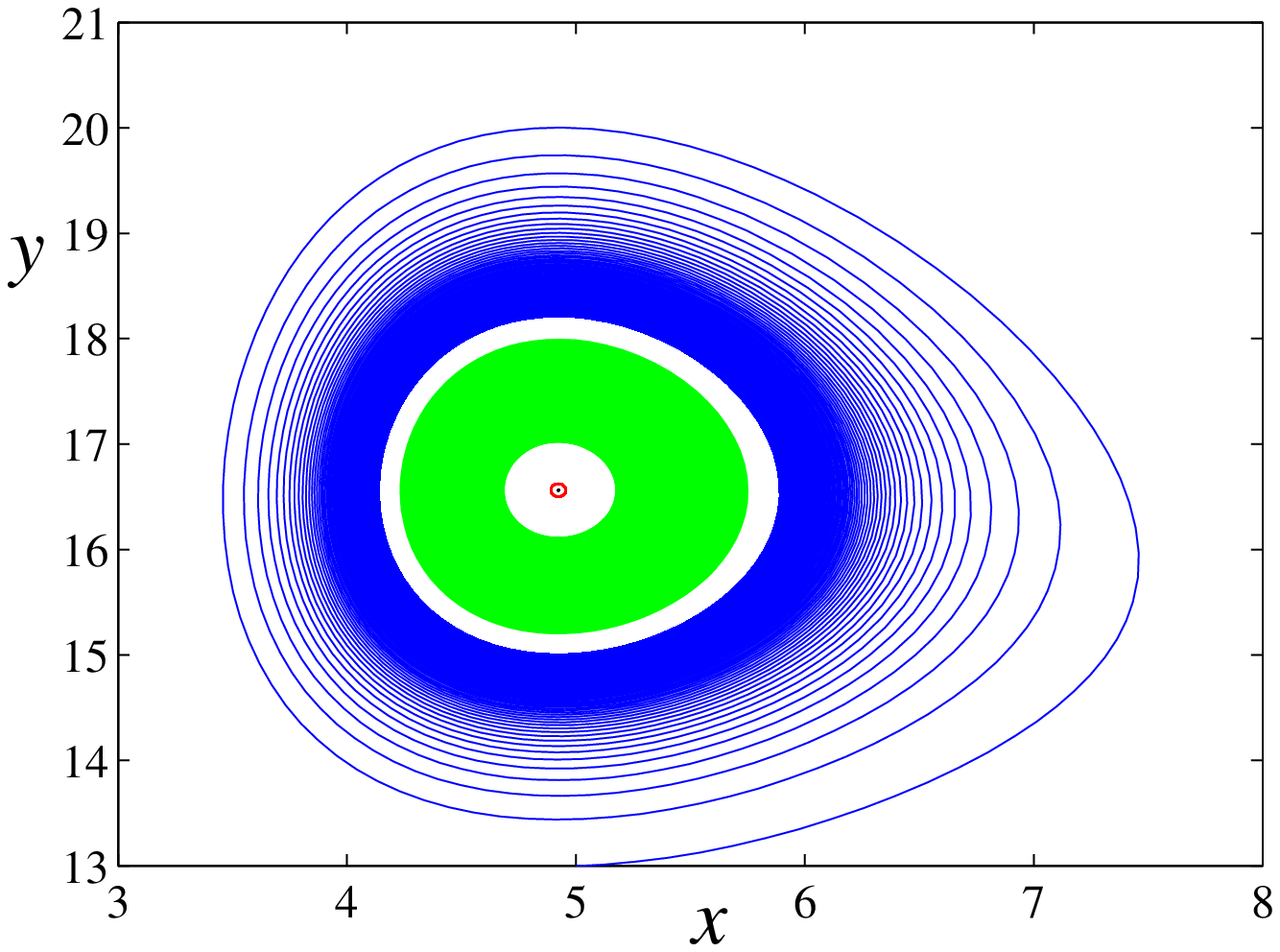}}
\caption{Multiple limit cycles.}
\label{po2}
\end{center}\end{figure}

Choose $K=20$, $A=2$, $a=0.004905$, $b=-0.10891$, and $d=24.28$. The orbit (blue curve) starting at $(11, 40)$ spirals inward, the orbit (green curve) starting at $(11, 36)$ spirals outward. The orbit (red curve) starting at $(11, 35)$ spirals inward and converges to $E_\alpha$. There are two limit cycles in this case. See Fig. \ref{po2} (a).

Choose $K=10.5$, $A=-0.5$, $a=0.01809954751$, $b=-0.1809954751,$ and $d=8.99$. The orbit (blue curve) starting at $(5, 13)$ spirals inward, the orbit (green curve) starting at $(5, 17)$ spirals outward. The orbit (red curve) starting at $(4.92, 16.5)$ spirals inward. However, $E_\alpha$ (black point) is unstable (i.e.,$G'(\alpha)>0$). There are three limit cycles in this case. See Fig. \ref{po2} (b).


\subsection{Existence and non-existence of periodic orbits}

In this section we explore sufficient conditions for existence and non-existence of periodic orbits of system \eqref{third}. The following theorem showed the position of periodic orbits whenever exist.

\begin{theorem}
If system \eqref{third} has a closed orbit,  then it lies entirely in the strip
\begin{equation}
    \{(x,y)\ |\ r_1\leq x\leq r_2\ \ \textrm{and}\ \ y>0  \}
\end{equation}
where $r_1=\max\{0,A\}$ and $r_2=\min\{\beta,K\}.$
\end{theorem}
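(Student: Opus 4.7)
The plan is to control any closed orbit by three successive restrictions: (i) it lies in the open positive cone $\{x>0,\ y>0\}$; (ii) $x\geq r_1$; and (iii) $x\leq r_2$. Part (i) follows from the invariance of the coordinate axes: $\dot y=y(p(x)-d)$ vanishes on $\{y=0\}$, and $\dot x=p(0)(G(0)-y)=0$ on $\{x=0\}$ since $p(0)=0$. Hence every closed orbit sits in $\{x>0,\ y>0\}$.

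For (ii) I would split on the sign of $A$. When $A\leq 0$ we have $r_1=0$ and part (i) already finishes the job. When $A>0$, on the line $\{x=A,\ y>0\}$,
\begin{equation*}
\dot x=p(A)(G(A)-y)=-p(A)\,y<0,
\end{equation*}
so the line is crossed strictly leftward and no closed orbit can cross it. Furthermore, on the strip $\{0<x<A\}$ the three factors of $G(x)=(x-A)(K-x)(ax^2+bx+1)$ have signs $(-)(+)(+)$, the last being positive for $x\geq 0$ since $b>-2\sqrt a$. Thus $G(x)<0$ and $\dot x=p(x)(G(x)-y)<0$ throughout this strip, so no closed orbit can lie entirely in $\{0<x<A\}$ either. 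Combining both observations forces $x>A$ on any closed orbit.

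For (iii) I first bound $x<K$ by the same device: $G(K)=0$ gives $\dot x=-p(K)\,y<0$ on $\{x=K,\ y>0\}$, while on $\{x>K\}$ the factors of $G(x)$ have signs $(+)(-)(+)$, giving $\dot x<0$ uniformly. Together these rule out any visit to $\{x\geq K\}$. To sharpen this to $x\leq\beta$ in the case $\beta<K$, I would use a second-derivative test at a point $(x_m,y_m)$ where $x$ attains its maximum along the orbit. There $\dot x=0$, so $y_m=G(x_m)$, and differentiating $\dot x=p(x)(G(x)-y)$ along the flow yields, after using $\dot x=0$,
\begin{equation*}
\ddot x\big|_{(x_m,y_m)}=-p(x_m)\,\dot y\big|_{(x_m,y_m)}=-p(x_m)\,y_m\,(p(x_m)-d).
\end{equation*}
Since $\ddot x\leq 0$ at a maximum and $p(x_m),y_m>0$, this forces $p(x_m)\geq d$, which by the definition of $\alpha$ and $\beta$ in \eqref{roots} yields $\alpha\leq x_m\leq\beta$. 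Combined with $x_m<K$, one concludes $x\leq x_m\leq\min\{\beta,K\}=r_2$.

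The main technical point is the second-derivative test: one must verify that both $\dot x=0$ and $\ddot x\leq 0$ hold at the maximum, which is standard because $x$ restricted to the periodic orbit is a smooth periodic function of time, and then expand $\ddot x$ via the chain rule using $\dot x=0$ to kill all cross terms. A minor caveat is the degenerate regime $d\geq d_m$, where $\beta$ does not exist as a real root of \eqref{hx}; but then $p(x)<d$ everywhere, so $\dot y<0$ on $\{y>0\}$ and no closed orbit exists, making the statement vacuous in that regime.
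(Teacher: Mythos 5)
Your proof is correct, and for the parts $x>r_1$ and $x<K$ it coincides with the paper's: both rest on the sign of $\dot x$ on the vertical lines $x=A$, $x=K$ and in the strips where $G<0$. Where you genuinely diverge is the sharpening to $x\leq\beta$. The paper observes that the line $x=\beta$ is crossed leftward above $E_\beta$ and rightward below it, so a closed orbit meeting that line must enclose $E_\beta$, and then invokes index theory ($E_\beta$ is a saddle of index $-1$, $E_\alpha$ a node, focus or cusp, so no enclosed configuration sums to $+1$). You instead evaluate $\ddot x$ at the point where $x$ attains its maximum along the orbit, obtaining $\ddot x=-p(x_m)\,y_m\,(p(x_m)-d)\leq 0$, hence $p(x_m)\geq d$, which is equivalent to $h(x_m)\leq 0$, i.e.\ $\alpha\leq x_m\leq\beta$. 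Your route is more elementary (no topological index argument, no case analysis on which equilibria lie inside the orbit) and as a bonus shows the orbit must reach at least to $x=\alpha$; the paper's route is shorter to state and reuses the equilibrium classification already established in Section~2. Two small points worth tightening: at $d=d_m$ the roots $\alpha=\beta$ are still real, so that case is not vacuous (though your extremum argument handles it, forcing $x_m=1/\sqrt a$), and in part (i) you should note explicitly that a closed orbit touching an invariant axis would be contained in it by uniqueness, and the one-dimensional axis dynamics admits no periodic orbits.
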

\begin{proof}
It is apparent that  $r_1\geq 0$. Note that $\dot{x}_{|x=A}<0.$ So any orbit cannot cross the line $x=A$ from left to right. Thus if there exists a closed orbit, it should lie right to the line $x=A$. Similarly, we have $\dot{x}_{|x=K}<0$. Now if $r_2=\beta$, then $\dot{x}_{|x=\beta}<0$ (resp. $>0$) when $y>G(\beta)$ (resp. $y<G(\beta)$). Thus, any closed orbit crossing the line $x=\beta$ contains $E_{\beta}$ in its interior. However, by index theory it is impossible, because $E_{\beta}$ is a saddle point and $E_\alpha$ is a node, a focus or a cusp point. Hence, the desired result is proved.
\end{proof}

\begin{theorem}
System \eqref{third} has no closed orbits if either of the following condition holds: (a). $A \geq \frac{1}{\sqrt{a}};$\ \  (b). $A < \frac{1}{\sqrt{a}}$ \ \ and\ \  $0<d<p(A)$; \ \  (c). $G'(x)>0$ \ for all $x\in(r_1,\beta)$.
\end{theorem}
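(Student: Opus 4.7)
The plan is to handle (a), (b) by an index/location argument and to handle (c) by a Dulac function. By the preceding theorem every closed orbit of \eqref{third} is contained in the open strip
\[
\Omega := (r_1, r_2) \times (0, \infty),
\]
and since $\Omega$ lies in the upper half plane, the Jordan region enclosed by any closed orbit lies there as well. Consequently the only equilibria that such an orbit could possibly surround are the interior equilibria $E_\alpha$ and $E_\beta$. The latter, whenever it exists, is a saddle of index $-1$ and so cannot alone support a closed orbit via the Poincar\'e index formula $\sum\mathrm{ind}=+1$. It therefore suffices, in (a) and (b), to show that $E_\alpha$ is absent from $\Omega$.

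For (a), one always has $\alpha\le 1/\sqrt a$, so the hypothesis $A\ge 1/\sqrt a$ gives $\alpha\le A = r_1$ and $E_\alpha\notin\Omega$. For (b), the chain $0<d<p(A)$ combined with the fact that $p$ has the sign of its argument forces $A>0$ (otherwise $p(A)\le 0<d$); hence $0<A<1/\sqrt a$. On this interval $p$ is strictly increasing, and $p(\alpha)=d<p(A)$ yields $\alpha<A=r_1$, so again $E_\alpha\notin\Omega$. (If $d\ge d_m$ then no interior equilibrium exists and the statement is vacuous; the boundary subcases where $\alpha$ coincides with $A$ or $1/\sqrt a$ are handled by the same inequalities, since $E_\alpha$ then lies on the closure but not in the interior of $\Omega$.)

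For (c), I would apply the Bendixson--Dulac criterion on the simply connected region $\Omega$ with the Dulac function
\[
B(x,y) = \frac{1}{p(x)\,y},
\]
which is $C^\infty$ and positive on $\{x>0,\,y>0\}$. A short computation, exploiting that $B$ cancels the factor $p(x)$ from $\dot x$ and eliminates the $y$-dependence of $\dot y$, gives
\[
\frac{\partial (B\dot x)}{\partial x}+\frac{\partial (B\dot y)}{\partial y}
=\frac{\partial}{\partial x}\!\Bigl(\frac{G(x)}{y}-1\Bigr)+\frac{\partial}{\partial y}\!\Bigl(1-\frac{d}{p(x)}\Bigr)
=\frac{G'(x)}{y},
\]
which is strictly positive throughout $\Omega$ because $y>0$ there and, by hypothesis, $G'(x)>0$ on $(r_1,\beta)\supseteq(r_1,r_2)$. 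Bendixson--Dulac then forbids any closed orbit in $\Omega$, completing (c). I do not expect a genuine obstacle: the heavy lifting of locating closed orbits was already done by the preceding theorem, and the only conceptual step is recognizing that the predator-prey structure of \eqref{third} makes $1/(p(x)y)$ the natural Dulac factor that produces the clean divergence $G'(x)/y$.
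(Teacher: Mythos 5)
Your proof is correct and follows essentially the same route as the paper: parts (a) and (b) are handled by showing that $E_\alpha$ lies outside the positive cone (so that, $E_\beta$ being a saddle, no closed orbit can satisfy the index requirement), and part (c) by a Dulac-type argument with $G'(x)>0$ forcing a one-signed divergence. Your Dulac function $1/(p(x)y)$ is exactly the multiplier implicit in the paper's logarithmic change of variables $x_1=\ln x$, $y_1=\ln y$ together with the time rescaling by $ax^2+bx+1$, so the two divergence computations agree up to the Jacobian factor $xy$.
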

\begin{proof}
If condition $(a)$ or $(b)$ is satisfied, $E_{\alpha}$ is outside the positive cone, so closed orbits do not exist. For part $(c)$, we make the following change of variables
\begin{equation*}
    x_1=\ln{x}, \ \ y_1=\ln{y}, \ \ \textrm{and} \ \ \tau=\frac{t}{ax^2+bx+1},
\end{equation*}
and obtain
\begin{equation*}
\left.\begin{split}
\displaystyle \frac{dx_1}{d\tau}&=G(e^{x_1})-e^{y_1}, \ \ \displaystyle \frac{dy_1}{d\tau}=-h(e^{x_1}),
\end{split}\right.
\end{equation*}
where the function $h(\cdot)$ is defined by Eq. \eqref{hx}. Then if $G'(x)>0$ for all $x\in(r_1,\beta)$,
\begin{align*}
    \nabla\cdot\langle G(e^{x_1})-e^{y_1}, -h(e^{x_1}) \rangle = \frac{d}{dx_1}\left(G(e^{x_1})-e^{y_1}\right)+\frac{d}{dy_1}\left(-h(e^{x_1})\right)= e^{x_1}G'(e^{x_1})>0.
\end{align*}
By Dulac's criteria, system \eqref{third} has no closed orbits in the positive cone.
\end{proof}

\begin{theorem}
Suppose that $A\leq 0$ and $\alpha<K<\beta$. If $G'(\alpha)>0$,  system \eqref{third} has a periodic orbit in the positive cone containing $E_{\alpha}$.
\end{theorem}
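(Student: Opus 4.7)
The plan is to apply Poincar\'e-Bendixson to an orbit starting near $E_\alpha$. First I would note that under the hypotheses the parameters lie in region $V_\alpha$ of the weak-Allee table, so the only equilibria in the closed positive cone are $E_0=(0,0)$, $E_K=(K,0)$, and the interior equilibrium $E_\alpha$; the point $E_\beta$ has $G(\beta)<0$ and lies outside the cone. Formulas \eqref{td} together with $\alpha<1/\sqrt a$ give $det(\mathbb{V}(E_\alpha))=p(\alpha)p'(\alpha)G(\alpha)>0$ and $tr(\mathbb{V}(E_\alpha))=p(\alpha)G'(\alpha)>0$, so $E_\alpha$ is an unstable hyperbolic node or focus of index $+1$. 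When $A<0$ the origin is a hyperbolic saddle with stable manifold the positive $y$-axis, and $E_K$ is a hyperbolic saddle (eigenvalues $-K(K-A)<0$ and $p(K)-d>0$) whose stable manifold is the $x$-axis; when $A=0$ the origin is only semi-hyperbolic but the $y$-axis remains a globally invariant curve, which is all the argument below needs.

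Next, pick $(x_0,y_0)$ in a small punctured neighborhood of $E_\alpha$ in the open positive cone, and let $\gamma$ be its forward orbit. Invariance of the axes and the absorbing set constructed in the introduction imply $\gamma$ stays in the open positive cone and is forward bounded, so $\omega(\gamma)$ is non-empty, compact, connected, and (both forward and backward) invariant. A standard Lyapunov disk around the unstable $E_\alpha$ shows $E_\alpha\notin\omega(\gamma)$. The goal is then to prove $\omega(\gamma)$ contains no equilibrium; once this is done, the Poincar\'e-Bendixson theorem immediately forces $\omega(\gamma)$ to be a periodic orbit $\Gamma$ in the open positive cone.

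The main obstacle is excluding $E_0$ and $E_K$ from $\omega(\gamma)$, which I would handle with the Butler-McGehee lemma combined with unboundedness of the global stable manifolds. Assume $E_0\in\omega(\gamma)$. Since $\gamma$ is interior it cannot converge to $E_0$, so $\omega(\gamma)\neq\{E_0\}$, and Butler-McGehee produces a point $q=(0,y_1)\in\omega(\gamma)$ with $y_1>0$; by invariance the entire $y$-axis orbit through $q$ lies in $\omega(\gamma)$, but $\dot y=-dy$ on the $y$-axis gives $y(-t)=y_1e^{dt}\to\infty$, contradicting compactness. (For $A=0$ the same conclusion follows directly from invariance of the $y$-axis, bypassing hyperbolicity.) Assume now $E_K\in\omega(\gamma)$. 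Again $\omega(\gamma)\neq\{E_K\}$, so Butler-McGehee yields $q=(x_1,0)\in\omega(\gamma)$ with $x_1>0$ and $x_1\neq K$; on the invariant positive $x$-axis, $\dot x=x(K-x)(x-A)$ is positive on $(0,K)$ and negative on $(K,\infty)$. If $x_1\in(0,K)$ the backward orbit through $q$ asymptotes to $E_0$, forcing $E_0\in\omega(\gamma)$ and recycling the contradiction above; if $x_1>K$ the backward orbit escapes to infinity, contradicting compactness of $\omega(\gamma)$.

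Having ruled out every equilibrium from $\omega(\gamma)$, Poincar\'e-Bendixson delivers a periodic orbit $\Gamma\subset\omega(\gamma)$ lying in the open positive cone. Finally, $\Gamma$ must enclose a nonempty set of equilibria of total Poincar\'e index $+1$; since $\Gamma$ is contained in $\{x>0,y>0\}$, winding-number considerations force both axial equilibria $E_0$ and $E_K$ to lie outside $\Gamma$, so the only candidate is $E_\alpha$ (index $+1$), which must therefore lie in the interior of $\Gamma$, completing the proof.
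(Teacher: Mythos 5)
Your proof is correct and follows essentially the same route as the paper's: Poincar\'e--Bendixson applied to a forward-bounded interior orbit, after excluding the boundary saddles (whose stable manifolds lie on the coordinate axes) and the repelling $E_\alpha$ from the $\omega$-limit set. The paper compresses this into three lines, leaving implicit the Butler--McGehee-type exclusion of $E_0$ and $E_K$ and the index argument that you spell out explicitly (including the semi-hyperbolic boundary case $A=0$, which the paper does not single out).
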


\begin{proof} When the given condition holds, only $E_0,\ E_K$ and $E_{\alpha}$ exist, and stable manifolds of two saddles $E_0$ and $ E_K$ lie on the coordinate axes. Notice that all orbits initiating in the positive cone eventually enter the compact set bounded by $x$-axis, $y$-axis and the line $x+y=M$, and $E_{\alpha}$ is an unstable focus (or node). By Poincar\'e-Bendixson Theorem, there must be a periodic orbit in the positive cone containing $E_{\alpha}$.
\end{proof}


\section{Weak, strong Allee effect and biological interpretations}

As we know, Allee effect is weak when $-K<A<0$ and strong when $0<A<K$. A transcritical bifurcation between $E_{0}$ and $E_{A}$ occurs at $A=0$, and $E_A$ exists for $A>0$. The presence of $E_A$ will change the qualitative behaviors and dynamics of system \eqref{third}. In this section, we will first analyze the nilpotent saddle bifurcation and explore the distinct dynamics induced by strong Allee effects, then we explore the impact of both weak and strong Allee effects on the population dynamics of predators and preys. Biological significance of Allee effects are also discussed.

\subsection{Nilpotent saddle bifurcation}

Compared with weak Allee effects, strong Allee effects introduce complicated and distinct dynamics which are revealed by nilpotent saddle bifurcation below.
\begin{theorem}
Recall that $\ell=K\sqrt{a}$. If $A=K<\frac{1}{\sqrt{a}}$ and $d=p(A)$, then there exists a nilpotent saddle bifurcation in the neighborhood of the point $E_A$, which is a singular point of multiplicity 3. System \eqref{third} localized at $E_A$ is $C^\infty$ topologically equivalent to
\begin{equation} \label{saddle_2}
\left\{\begin{split}
    \dot{x}&=\displaystyle y+\gamma_1x^2,\\
    \dot{y}&=\displaystyle y(x+\gamma_2x^2+\gamma_3x^3+\gamma_4x^4+O(x^5)),
\end{split}\right.
\end{equation}
where \begin{equation} \label{gamma_12} \gamma_1=\frac{\ K(\ell^2+Kb+1)^2}{\ell^2-1}<0, \ \ \gamma_2=\frac{(\ell^2+Kb+1)[K(3\ell^2-5)b+6\ell^4-8\ell^2-2]}{2K(\ell^2-1)^2}.\end{equation}
If $\gamma_2\neq 0$, the bifurcation is codimension 2. $\gamma_2$ vanishes when $$b=b_{ns}:=\frac{2(3\ell^4-4\ell^2-1)}{K(5-3\ell^2)}.$$
If $b=b_{ns}$, then
\begin{equation} \label{gamma_34}\gamma_3=\frac{27(6\ell^4-13\ell^2+3)(\ell^2-1)^4}{K^2(3\ell^2-5)^4}, \ \ \ \gamma_4=\frac{81(21\ell^4-47\ell^2+6)(\ell^2-1)^6}{K^3(3\ell^2-5)^6}.\end{equation} The nilpotent saddle bifurcation is codimension 3 if $\ell\neq\left(\frac{47-\sqrt{1705}}{42}\right)^{1/2}$.
\end{theorem}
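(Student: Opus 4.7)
The plan is to verify that $E_A$ is a nilpotent singular point of multiplicity three, reduce the local system to a Li\'enard-type normal form that preserves the invariance of $\{y=0\}$, and then invoke the classification of nilpotent singularities to read off the codimension from the first non-vanishing $\gamma_k$. When $A=K$, the factorization $G(x)=-(x-K)^2(ax^2+bx+1)$ gives $G(K)=G'(K)=0$, and combined with $p(K)-d=0$ the Jacobian \eqref{vm} at $E_A=(K,0)$ becomes the nilpotent matrix with single nonzero entry $-p(K)$ in position $(1,2)$. Multiplicity three follows from observing that $A=K$ collapses $E_A$ and $E_K$, while $d=p(K)$ additionally forces the interior equilibrium $E_\alpha$ (with $\alpha=K$, since $K<1/\sqrt{a}<\beta$ places the coalescing root on the increasing branch of $p$) to merge with them at $(K,0)$.

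Next I would bring the system into Li\'enard shape. Translate $x_1=x-K$ and rescale $y_1=-p(K)y$, then $x_2=p'(K)x_1$, $y_2=p'(K)y_1$; because the $x$-axis is invariant, the factor of $y$ survives every scaling, so after these rescalings $\dot{y}_2=y_2\cdot(x_2+\cdots)$. Further near-identity transformations acting only on $\dot{x}$ (analogous to the reduction in Proposition \ref{t1}) absorb the higher-order terms of $\phi$ into $\psi$, producing the stated normal form
\[
\dot{x}=y+\gamma_1 x^2,\qquad \dot{y}=y\bigl(x+\gamma_2 x^2+\gamma_3 x^3+\gamma_4 x^4+O(x^5)\bigr).
\]
A direct Taylor expansion yields $\gamma_1=-K/p'(K)=K(\ell^2+Kb+1)^2/(\ell^2-1)$, and $\ell<1$ gives $\gamma_1<0$. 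Similarly, $\gamma_2$ arises from the $x_1^2$-coefficient of $p(x_1+K)-p(K)$ combined with the scalings and one further normalization, simplifying to the second formula in \eqref{gamma_12}. If $\gamma_2\neq 0$, the Dumortier--Roussarie--Sotomayor classification \cite{DRS} identifies the singularity as a codimension-$2$ nilpotent saddle with a versal unfolding by $(A,d)$ around $(K,p(K))$, the transversality following from a $2\times 2$ Jacobian check analogous to the one in Theorem \ref{BT3_thm}.

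For the codimension-$3$ case, I would set $\gamma_2=0$, solve the resulting linear equation in $b$ to get $b_{ns}=2(3\ell^4-4\ell^2-1)/[K(5-3\ell^2)]$, and iterate the near-identity normalization two more orders---keeping the $\{y=0\}$ invariance so the Li\'enard structure survives---to extract $\gamma_3$ and $\gamma_4$. The degeneracy condition $\gamma_4=0$ then reduces to $21\ell^4-47\ell^2+6=0$, whose only root in $(0,1)$ is $\ell^2=(47-\sqrt{1705})/42$; excluding this value keeps $\gamma_4\neq 0$ and the nilpotent-saddle classification yields codimension $3$ with $(b-b_{ns},A-K,d-p(K))$ as unfolding parameters. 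The main obstacle will be the computation of $\gamma_3$ and $\gamma_4$: two orders of near-identity transformation must be tracked carefully while preserving both the Li\'enard form and the factor of $y$ in $\dot{y}$, and the intermediate symbolic expressions in $(K,a,b)$ are long enough that they collapse to the clean $\ell$-rational formulas in \eqref{gamma_34} only after substituting $b=b_{ns}$, so a computer-algebra computation will be essentially unavoidable.
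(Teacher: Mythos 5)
Your proposal is correct and follows essentially the same route as the paper: both exploit the coalescence of $E_A$, $E_K$ and $E_\alpha$ to obtain a multiplicity-three nilpotent point, translate $E_A$ to the origin via $(x,y)\mapsto(x-K,-p(K)y)$, reduce to the normal form \eqref{saddle_2} by a change of variables and time preserving the invariant $x$-axis, and read off the codimension from $\gamma_2$ and, after substituting $b=b_{ns}$, from $\gamma_4$, arriving at the same degenerate value $\ell^2=(47-\sqrt{1705})/42$. The only cosmetic differences are your labelling of the intermediate form as ``Li\'enard-type'' and your explicit mention of the versality checks, which the paper likewise leaves at the level of assertion.
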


\begin{proof}
If $A=K<\frac{1}{\sqrt{a}}$ and $d=p(A)$, we have $\alpha=A=K$, and $E_\alpha$, $E_A$ and $E_K$ coalesce. So $E_A$ is a singular point of multiplicity 3.
By the affine map $(x, y)\rightarrow(x-\alpha, -p(\alpha)(y-G(\alpha)))$, we obtain
\begin{equation} \label{saddle_1}
\left\{\begin{split}
    \dot{x}&=\displaystyle y+\frac{p(\alpha)G''(\alpha)}{2}x^2+\frac{p'(\alpha)}{p(\alpha)}xy+\frac{p(\alpha)G'''(\alpha)+3p'(\alpha)G''(\alpha)}{6}x^3+\frac{p''(\alpha)}{2p(\alpha)}x^2y\\
    &\hskip 0.5cm +\frac{p(\alpha)G^{(4)}(\alpha)+4p'(\alpha)G'''(\alpha)+6p''(\alpha)G''(\alpha)}{24}x^4+\frac{p'''(\alpha)}{6p(\alpha)}x^3y + O(x^5, x^4y),    \\
    \dot{y}&=\displaystyle p'(\alpha)xy+\frac{1}{2}p''(\alpha)x^2y+\frac{1}{6}p'''(\alpha)x^3y+ yO(x^4).
\end{split}\right.
\end{equation}

There exists a change of variables and time preserving the invariance of the horizontal axis, which brings system \eqref{saddle_1} to system \eqref{saddle_2}, where
\begin{equation*}
\left.\begin{split}
     \gamma_1=\frac{p(\alpha)G''(\alpha)}{2p'(\alpha)}<0, \ \ \gamma_2=\frac{p(\alpha)p''(\alpha)G''(\alpha)-2(p'(\alpha))^2G''(\alpha)-p(\alpha)p'(\alpha)G'''(\alpha)}{2p(\alpha)(p'(\alpha))^2G''(\alpha)}.
\end{split}\right.
\end{equation*}
Expressions of $\gamma_3$ and $\gamma_4$ are complicated, so we will not present them here.

Note that $\ell=K\sqrt{a}$ and $0<\ell<1$, by which $\gamma_1$ and $\gamma_2$ are simplified as given in Eq. \eqref{gamma_12}. Straightforward calculation leads to the explicit conditions for the sign of $\gamma_2$. More specifically,

$(a)$ if $0<\ell\leq\frac{-3+\sqrt{21}}{6}$, then $\gamma_2<0$ for all $b>-2\sqrt{a}$;

$(b)$ if $\frac{-3+\sqrt{21}}{6}<\ell<1$, then $\gamma_2<0$ (resp. $\gamma_2>0$) if $b>b_{ns}$ (resp. $-2\sqrt{a}<b<b_{ns}$).


The nilpotent saddle bifurcation is of codimension 2 if $\gamma_2\neq 0$. If $\gamma_2=0$, we have $\frac{-3+\sqrt{21}}{6}<\ell<1$ and $b=b_{ns}$. Then expressions of $\gamma_3$ and $\gamma_4$ are significantly simplified by substituting $b=b_{ns}$, and they are given by Eq. \eqref{gamma_34}. Note that the coefficient $\gamma_4$ vanishes when $\ell=\left(\frac{47-\sqrt{1705}}{42}\right)^{1/2}\in \left(\frac{-3+\sqrt{21}}{6}, 1\right)$. Therefore, the nilpotent saddle bifurcation is codimension 3 if $\gamma_4\neq 0$.
\end{proof}

\begin{remark}
The codimension of nilpotent saddle bifurcation is one dimension less than that of ordinary nilpotent saddle bifurcation, because the saddle connection between $E_A$ and $E_K$ is fixed due to the invariance of $x$-axis.
\end{remark}

\begin{figure}[!htp]
\begin{center}
\includegraphics[angle=0,width=0.9\textwidth]{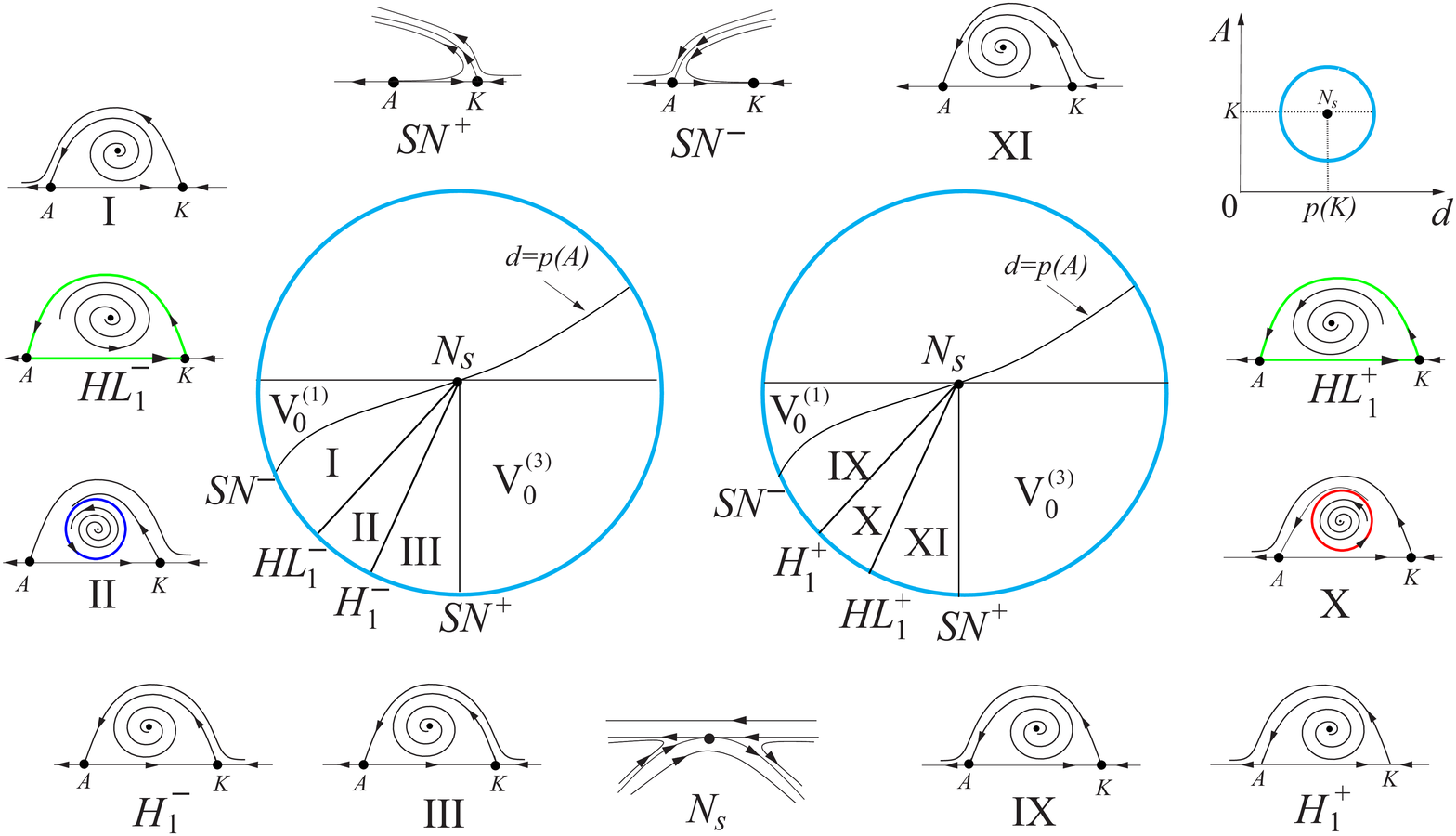}
\caption{Nilpotent saddle bifurcation diagram for $\gamma_2<0$ (left disk) and $\gamma_2>0$ (right disk).}
\label{ns2}
\end{center}\end{figure}

We sketch nilpotent saddle bifurcation diagram of codimension 2 in a small neighborhood of $(p(K), K)$ in $(d, A)$ plane. See Fig. \ref{ns2}, in which we plot the qualitative position of Hopf bifurcation curve and heteroclinic bifurcation curve. The feasible region is the lower part of the disk centered at $(p(K), K)$, which is subdivided in five regions by curves of saddle-node bifurcation, Hopf bifurcation and heteroclinic bifurcation. If $\gamma_2<0$, we have a curve of supercritical Hopf bifurcation $H_1^-$, and a curve of heteroclinic bifurcation $HL_1^-$. There exists a unique stable limit cycle between these curves $H_1^-$ and $HL_1^-$. If $\gamma_2>0$, we have a curve of subcritical Hopf bifurcation $H_1^+$, and a curve of heteroclinic bifurcation $HL_1^+$. There exists a unique unstable limit cycle between two curves $H_1^+$ and $HL_1^+$.  A transcritical bifurcation involving $E_\alpha$ and $E_K$ on $SN^+$. A transcritical bifurcation involving $E_\alpha$ and $E_A$ on $SN^-$. Phase portraits of system \eqref{third} on bifurcation curves and in subregions near the point $(p(K), K)$ are plotted in Fig. \ref{ns2}. Phase portraits for parameters $(d, K)$ in regions $V_0^{1}$ and $V_0^{1}$ are omitted because they are trivial.

\begin{remark}
If $A=K>\frac{1}{\sqrt{a}}$ and $d=p(A)$, then $E_\beta$, $E_A$ and $E_K$ coalesce, and $E_A$ is is a singular point of multiplicity 3. Note that $\ell>0$ in Eq. \eqref{gamma_12}, then $\gamma_1>0$, and $E_A$ is a nilpotent elliptic point. When one perturbs system \eqref{third} near the $(d, A)=(p(K), K)$, the perturbed system only has at most one singular point, i.e., the saddle $E_\beta$. The bifurcation diagram is just the part in a small neighborhood of the point $N_e$ in Fig. \ref{two_figure} (a).
\end{remark}



\subsection{Impact of  Allee effects and biological interpretations}

Allee effects have profound impact on dynamical behaviors of predator-prey systems, and we elaborate on their effects from the mathematical and biological points of view here.

The Allee effects induce richer and more complicated dynamics. For classical predator-prey models with Holling types functional response with logistic growth, Bogdanov-Takens bifurcation point of codimension 3 is the organizing center, and the codimension of degenerate Hopf bifurcation is two \cite{XZ, ZCW}. On the other hand, with either weak or strong Allee effects, the system \eqref{third} exhibits not only cusp type of Bogdanov-Takens bifurcation of codimension 3, but also the degenerate Hopf bifurcation of codimension 3. To the best of our knowledge, this is the first time that three limit cycles are discovered in predator-prey systems with Allee effects. Moreover, we have proven that nilpotent saddle/elliptic point bifurcation exists, a distinct phenomenon that is induced by strong Allee effects.

With the help of bifurcation analysis before, one can see the qualitative behaviors of model and impact of Allee effects by sketching phase portraits. Although it is impossible to sketch all the phase portraits here, the impact of Allee effects can be illustrated by three specific examples in Fig. \ref{connection3}. For simplicity, we denote $W^s(E)$ and $W^u(E)$ as the stable and unstable manifolds of a saddle point $E$. From Fig. \ref{connection3}, one can see that the basins of attraction are separated by separatrices, which are stable and unstable manifolds of saddle points $E_A$, $E_K$ and $E_\beta$. The final population sizes of two species depend on their initial populations.

\

\begin{figure}[!htp]
\begin{center}
\subfigure[{Homoclinic loop ($A<0$)}]
    {\includegraphics[angle=0,width=0.28\textwidth]{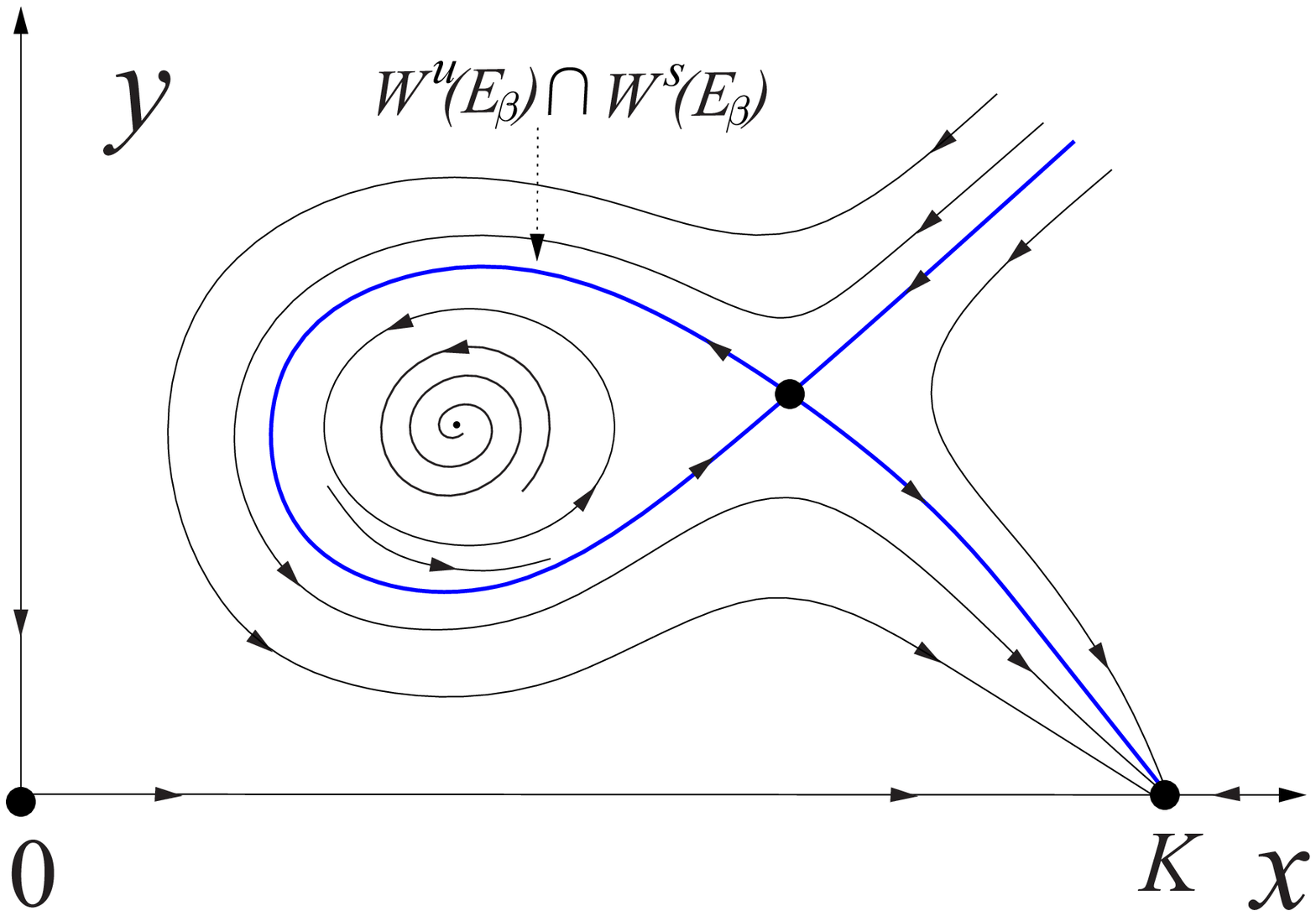}} \
\subfigure[{Homoclinic loop ($A>0$)}]
    {\includegraphics[angle=0,width=0.3\textwidth]{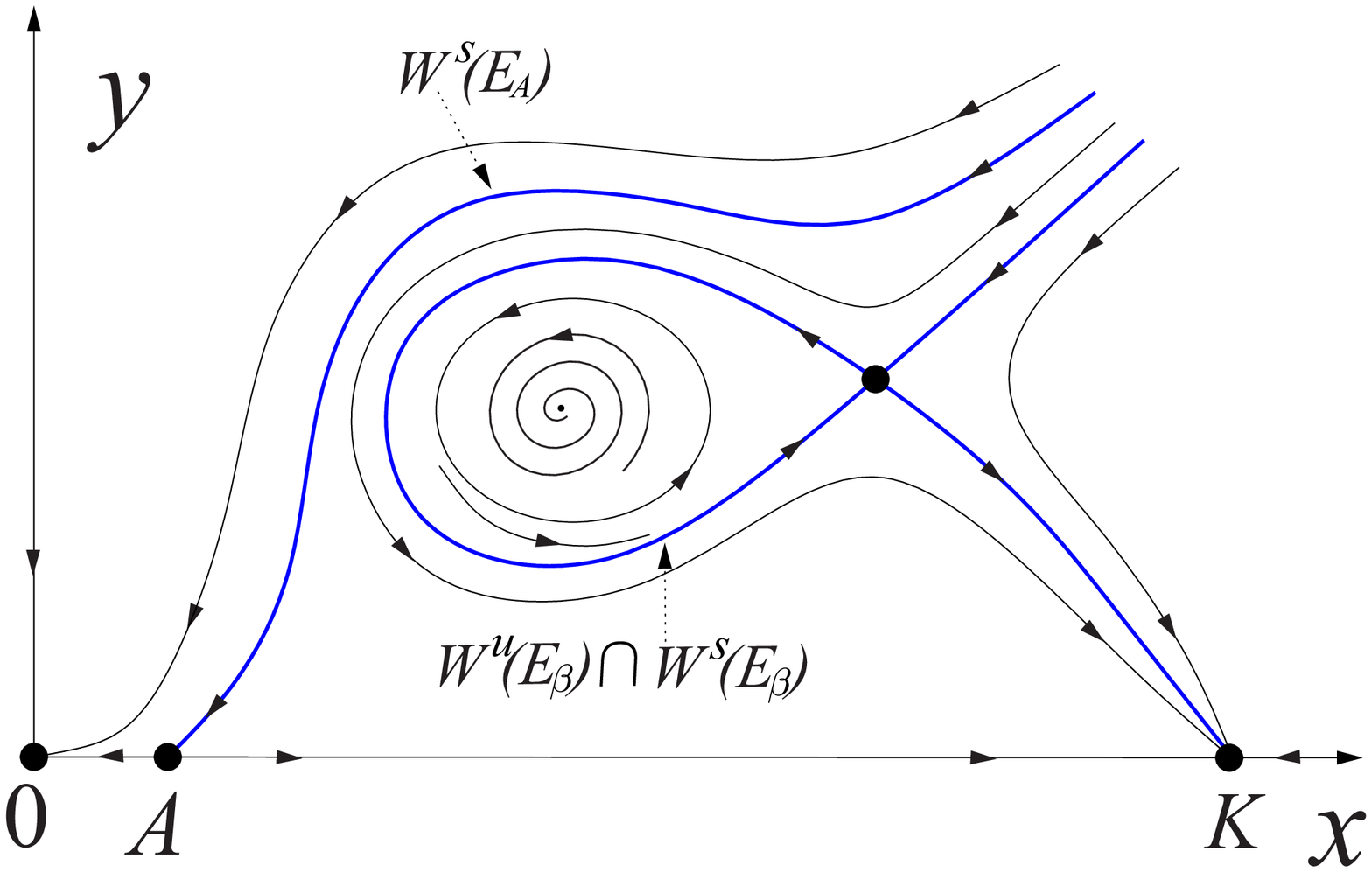}} \
\subfigure[{Heteroclinic loop ($A>0$)}]
    {\includegraphics[angle=0,width=0.3\textwidth]{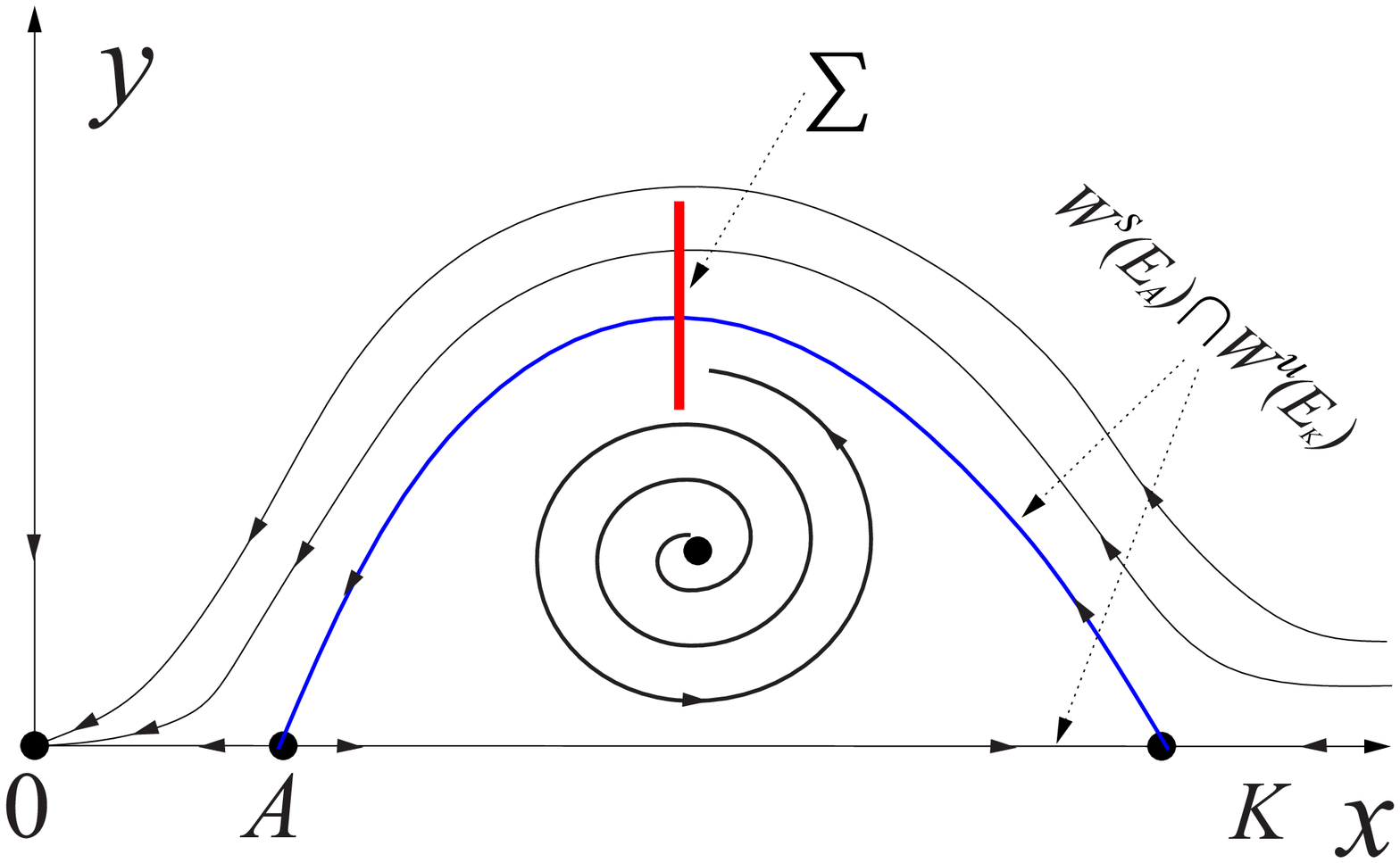}}
\caption{Codimension one phase portraits with Allee effects.}
\label{connection3}
\end{center}\end{figure}

Generally, basins of attraction of attractors are determined by the relative positions of stable and unstable manifolds of saddle points, which may change under perturbation. We take Fig. \ref{connection3} (c) as an example (i.e., $HL_1^-$ in Fig. \ref{ns2}), in which a heteroclinic loop exists, and preys and predators coexist for the initial condition (population) in the interior of the heteroclinic loop. We draw a line segment $\Sigma$ transversal to the heteroclinic orbit in the first quadrant.  After perturbation, if the point $W^u(E_K)\cap\Sigma$ is below the point $W^s(E_A)\cap\Sigma$, then a limit cycle emerges (see region {\bf II} in Fig. \ref{ns2}). Preys and predators coexist and their populations oscillate sustainably if the initial condition (population) is in the region beneath the stable manifold $W^s(E_A)$. Otherwise, both preys and predators are extinct. If the point $W^u(E_K)\cap\Sigma$ is above the point $W^s(E_A)\cap\Sigma$, then the first quadrant except $W^s(E_A)$ is the basin of attraction of $E_0$ (see region {\bf I} in Fig. \ref{ns2}), so both preys and predators die out!

The existence of homoclinic loops in Fig. \ref{connection3} (a) and (b) has been proven in Theorem \ref{BT3_thm} and by Remark \ref{BT3A}. Once homoclinic loops are broken under the perturbation, depending on the relative positions of stable and unstable manifolds of $E_\beta$, the basin of attraction of $E_K$ changes and a stable limit cycle which encloses the existing unstable limit cycle may emerge. One can easily sketch the phase portraits by Fig. \ref{BTco3}. Lastly, if comparing Fig. \ref{connection3} (a) with Fig. \ref{connection3} (b) and (c), one can see that the stable manifold $W^s(E_A)$ plays the role of a boundary, to its left both species are doomed to go extinct regardless of initial condition (population).

Compared with predator-prey models without Allee effects, Allee effects induce more steady states and sustained cycles. We summarize the distinct stability regimes that the Allee effect induce as below.

$(a)$ a stable equilibrium of coexistence and a stable equilibrium of extinction of both species;

$(b)$ up to two stable oscillations of populations and a stable equilibrium of extinction of both species;

$(c)$ a stable equilibrium of extinction of predators and a stable equilibrium of extinction of both species;

$(d)$ extinction of both species;

$(e)$ two stable oscillations of the populations.

Note that stability regimes $(a)$, $(b)$, $(c)$ and $(d)$ are induced by strong Allee effects, and stability regime $(d)$ is induced by weak Allee effects.

From the biological point of view, Allee effects in prey generally alter predator-prey dynamics in different ways. Firstly, strong Allee effects may introduce different scales of oscillations compared to systems without Allee effects. Secondly, strong Allee effects tend to destabilize coexistence of the two species. Strong Allee effects make populations more vulnerable to extinction due to the Allee threshold.  Sets of parameter values that predict stable coexistence in a model without Allee effects give rise to extinction of one or both species in the model with strong Allee effects. Thirdly, increasing the strength of strong Allee effects gradually reduces the range of initial conditions that lead to coexistence of both species. Finally, weak Allee effects in prey cause cycles between predators and preys for a wider range of model parameters than systems without Allee effects.

In summary, many species are confronted with Allee effects, either directly or through species they interact with. Allee effects should be taken into consideration in the management of populations of species, either for sustainable exploitation or for effective protection.




\section*{Appendix I: Proof of Proposition~\ref{t1}}
\setcounter{equation}{0}

\begin{proof}
  Using the near identity transformation
$$\displaystyle u=x-\frac{k_{21}}{3}x^3-\frac{k_{31}}{4}x^4+O(|(x,y)|^5),\ \  v=y+k_{30}x^3+k_{40}x^4+O(|(x,y)|^5),$$
\noindent
system \eqref{5.3} reduces to
\begin{equation*}
\left\{\begin{split}
    \dot{u}&=v,    \\
    \dot{v}&=\displaystyle m_{20}u^2+m_{30}u^3+(3k_{30}+m_{21})u^2v+\left(\frac{2m_{20}k_{21}+3m_{40}}{3}\right)u^4+(4k_{40}+m_{31})u^3v + R_{21}(u,v),
\end{split}\right.
\end{equation*}
where $R_{21}=O(|(u,v)|^5)$ is $C^{\infty}$ in $(u, v)$.

Let $u_2=m_{20}u_1$ and $v_2=m_{20}v_1$ and rewrite $u_2$ and $v_2$ as $x$ and $y$. Then we obtain
\begin{equation*}
\left\{\begin{split}
    \dot{x}&=y,    \\
    \dot{y}&=\displaystyle x^2+\left(\frac{m_{30}}{m_{20}^2}\right)x^3+\left(\frac{3k_{30}+m_{21}}{m_{20}^2}\right)x^2y+\left(\frac{2m_{20}k_{21}+3m_{40}}{3m_{20}^3}\right)x^4+\left(\frac{4k_{40}+m_{31}}{m_{20}^3}\right)x^3y + R_{22}(x,y),
\end{split}\right.
\end{equation*}
where $R_{22}=O(|(x, y)|^5)$ is $C^{\infty}$ in $(x, y)$. Then by proposition 5.3 of \cite{LCR}, the system above is $C^\infty$ equivalent to system \eqref{co3} with $\zeta$ given by \eqref{coeff}.
\end{proof}

\section*{Appendix II: Coefficients $a_{ij}, b_{ij}$ and $c_{ij}$ in Theorem~\ref{BT3_thm}}

Coefficients $a_{ij}$ in system \eqref{step2}:
\begin{eqnarray*}
\left.\begin{array}{l}
  a_{00}=-k_{00}m_{01}+m_{00}k_{01}, \quad a_{10}=k_{01}m_{10}-k_{10}m_{01}, \quad a_{01}=k_{10}+m_{01},\\
  a_{20}=k_{01}m_{20}+k_{21}m_{00}-k_{00}m_{21}-k_{20}m_{01},\\
  a_{30}=k_{01}m_{30}+k_{21}m_{10}+k_{31}m_{00}-k_{00}m_{31}-k_{10}m_{21}-k_{30}m_{01}, \\
  a_{40}=k_{01}m_{40}-k_{10}m_{31}-k_{20}m_{21}+k_{21}m_{20}+k_{31}m_{10}-k_{40}m_{01},\\
  a_{21}=\dfrac{3k_{01}k_{30}+k_{01}m_{21}-3k_{00}k_{31}-2k_{10}k_{21}}{k_{01}}, \quad a_{12}=\dfrac{2k_{21}^2}{k_{01}}, \quad a_{11}=\dfrac{2(k_{01}k_{20}-k_{00}k_{21})}{k_{01}},\\
  a_{31}=\dfrac{2k_{00}k_{21}^2+4k_{40}k_{01}^2+m_{31}k_{01}^2-3k_{01}k_{10}k_{31}-2k_{01}k_{20}k_{21}}{k_{01}^2}, \quad a_{22}=\dfrac{3k_{31}}{k_{01}}.
\end{array}\right.
\end{eqnarray*}
Coefficients $b_{ij}$ in system \eqref{step3}:
\begin{eqnarray*}
\left.\begin{array}{l}
b_{00}=a_{00}, \ b_{10}=a_{10}, \ b_{01}=a_{01}, \ b_{11}=a_{11},  \ b_{21}=a_{21},\\
b_{20}=a_{20}-\dfrac{a_{00}a_{12}}{2}, \ b_{30}=\dfrac{3a_{30}-a_{00}a_{22}-a_{10}a_{12}}{3},\\
b_{40}=\dfrac{12a_{40}-2a_{00}a_{12}^2-3a_{10}a_{22}-2a_{12}a_{20}}{12}, \ b_{31}=\dfrac{6a_{31}+a_{11}a_{12}}{6}.
\end{array}\right.
\end{eqnarray*}
Coefficients $c_{ij}$ in differential form \eqref{step4}:
\begin{eqnarray*}
\left.\begin{array}{l}
c_{00}=b_{00},\ c_{01}=b_{01}, \ c_{10}=\dfrac{2b_{10}b_{20}-b_{00}b_{30}}{2b_{20}},\\
c_{11}=\dfrac{2b_{11}b_{20}-b_{01}b_{30}}{2b_{20}},\ c_{20}=\dfrac{80b_{20}^3+45b_{00}b_{30}^2-48b_{00}b_{20}b_{40}-60b_{10}b_{20}b_{30}}{80b_{20}^2},\\
c_{21}=\dfrac{80b_{20}^2b_{21}+45b_{01}b_{30}^2-48b_{01}b_{20}b_{40}-60b_{11}b_{20}b_{30}}{80b_{20}^2}, \ c_{40}=\dfrac{b_{10}b_{30}(96b_{20}b_{40}-55b_{30}^2)}{48b_{20}^3},\\
c_{30}=\dfrac{(336b_{00}b_{20}b_{30}b_{40}-175b_{00}b_{30}^3-192b_{10}b_{20}^2b_{40}+210b_{10}b_{20}b_{30}^2)}{240b_{20}^3},\\
c_{31}=\dfrac{(336b_{01}b_{20}b_{30}b_{40}-175b_{01}b_{30}^3-192b_{11}b_{20}^2b_{40}+210b_{11}b_{20}b_{30}^2+240b_{20}^3b_{31}-240b_{20}^2b_{21}b_{30})}{240b_{30}^3}.
\end{array}\right.
\end{eqnarray*}


\end{document}